
\documentclass[10pt]{amsart}


\setlength{\textheight}{18.1cm}

\setlength{\textwidth}{11.4cm}




\usepackage{latexcad}
\usepackage{natbib}

\theoremstyle{definition}
\newtheorem{definition}{Definition}[section]

\theoremstyle{theorem}
\newtheorem{lemma}[definition]{Lemma}
\newtheorem{theorem}[definition]{Theorem}
\newtheorem{proposition}[definition]{Proposition}
\newtheorem{corollary}[definition]{Corollary}

\def\disp#1{\mathrm{d}(#1)}
\def\stre#1#2#3{\mathrm{s}^{#1}_{#2}(#3)}
\def\sstr#1{\mathrm{s}^*(#1)}

\title{How permutations displace points and stretch intervals}

\author{Daniel Daly}

\email{ddaly@du.edu, petr@math.du.edu}

\author{Petr Vojt\v{e}chovsk\'y}

\address{Department of Mathematics, University of Denver, 2360 S Gaylord St,
Denver, CO 80208, U.S.A.}

\begin{document}

\maketitle

\begin{abstract}
Let $S_n$ be the set of permutations on $\{1,\,\dots,\,n\}$ and $\pi\in S_n$.
Let $\disp{\pi}$ be the arithmetic average of $\{|i-\pi(i)|;\;1\le i\le n\}$.
Then $\disp{\pi}/n\in[0,\,1/2]$, the expected value of $\disp{\pi}/n$
approaches $1/3$ as $n$ approaches infinity, and $\disp{\pi}/n$ is close to
$1/3$ for most permutations. We describe all permutations $\pi$ with maximal
$\disp{\pi}$.

Let $\mathrm{s}^+(\pi)$ and $\mathrm{s}^*(\pi)$ be the arithmetic and geometric
averages of $\{|\pi(i)-\pi(i+1)|;\;1\le i<n\}$, and let $M^+$, $M^*$ be the
maxima of $\mathrm{s}^+$ and $\mathrm{s}^*$ over $S_n$, respectively. Then
$M^+=(2m^2-1)/(2m-1)$ when $n=2m$, $M^+ = (2m^2+2m-1)/(2m)$ when $n=2m+1$, $M^*
= (m^m(m+1)^{m-1})^{1/(n-1)}$ when $n=2m$, and, interestingly, $M^* =
(m^m(m+1)(m+2)^{m-1})^{1/(n-1)}$ when $n=2m+1>1$. We describe all permutations
$\pi$, $\sigma$ with maximal $\mathrm{s}^+(\pi)$ and $\mathrm{s}^*(\sigma)$.
\end{abstract}

\section{Motivation and introduction}

Allow us to begin with a motivation from the area of turbo coding \cite{HW,
Sklar}: Starting with the very first example \cite{BGT}, every turbo code
employs a permutation, called the \emph{interleaver}. Although the interleaver
has several functions within the coding process, its main objective is to
scramble the input bits so that input sequences with a few nonzero bits do not
produce output sequences with many nonzero bits, upon being encoded with a
convolutional code. The interleaver is typically of length at least one
thousand.

While it is easy to simulate the transmission channel and measure the
performance of a turbo code with a particular interleaver statistically, it
appears to be difficult to characterize those permutations that will perform
well as interleavers without actually testing them. Indeed, early
publications on turbo coding recommend to select the interleaver at
random---an advice still followed in practice.

Nevertheless, it has now become clear that it is sometimes possible to match
or outperform random interleavers with deterministic or semi-random
interleavers by carefully analyzing the channel and the decoding algorithm,
among other parameters.

As an illustration, we mention three properties of permutations that have been
suggested in the literature as desirable for the purposes of turbo coding. Let
$n$ be an integer, $S_n$ the set of permutations on $\{1,\dots,n\}$, and
$\pi\in S_n$. Then:
\begin{enumerate}
\item[(a)] $\pi$ should have no fixed points and, more generally, the
\emph{delay} $i-\pi(i)$ should be far from zero for every $i$ \cite{GMBC,
SSSN},

\item[(b)] the quantity $\min\{|i-j|+|\pi(i)-\pi(j)|;\;1\le i<j\le n\}$
should be large \cite{DD, SSSN},

\item[(c)] the \emph{dispersion} $|\{(i-j,\pi(i)-\pi(j));\;1\le i<j\le
n\}|\cdot (n(n-1)/2)^{-1}$ should be large \cite{TC, HW}.
\end{enumerate}
Viewed in this way, interleaver design is very much a combinatorial problem.

In this paper, we define and discuss two properties of permutations similar to
(a)--(c), namely \emph{displacement} and \emph{stretch}. Most of our arguments
are combinatorial in nature and no knowledge of coding is needed. While the
results obtained here can be considered complete from the mathematical point of
view (in their narrow scope), the investigation of the impact of the results on
turbo coding is in preliminary stages, is carried out by a different group of
researchers, and is mentioned only once below.

Here are the two properties and a summary of results:

\subsection{Displacement}\label{Ss:D}

For $\pi\in S_n$, let
\begin{equation}\label{Eq:Displacement}
    \disp{\pi} = \sum_{i=1}^n \frac{|i-\pi(i)|}{n}.
\end{equation}
The value $\disp{\pi}$ has been defined in \cite[Thm.\ 2]{GMBC}, where it is
called descriptively \emph{the average of the absolute values of the delays}.
We prefer to call it the \emph{displacement} of $\pi$, and $\disp{\pi}/n$ the
\emph{normalized displacement} of $\pi$.

We prove that the normalized displacement of a permutation ranges between $0$
and $1/2$, and we find all permutations with extreme displacement. Among all
permutations in $S_n$, the average normalized displacement approaches $1/3$
as $n$ approaches $\infty$. Moreover, the distribution of displacements is
such that a long, randomly chosen permutation will very likely have
normalized displacement close to $1/3$.

Hence, by selecting the interleaver at random, the class of permutations with
large or small displacement is rarely (never!) put to the test. Preliminary
results of Ramya Chandramohan \cite{Ramya} indicate that an S-random
interleaver (see \cite{DD}) with larger than average displacement performs
slightly better than an S-random interleaver.

It is easy to construct permutations with normalized displacement arbitrarily
close to a given $0\le d\le 1/2$. The problem is more difficult when the
permutation is supposed to have additional properties.

\subsection{Stretching}\label{Ss:S}

The two quantities defined in (b), (c) are telling us something about how the
permutation $\pi$ stretches intervals. To measure the average stretch of an
arbitrary collection $\mathcal A$ of subsets of $N=\{1,\dots,n\}$, we propose
the following two definitions:

For $A\subseteq N$, let $\mathrm{diam}(A)=\max\{i;\;i\in A\} - \min\{i;\;i\in
A\}$. When $\mathcal A\subseteq 2^N$ and $\pi\in S_n$, let
\begin{equation}\label{Eq:AStretch}
    \stre{+}{\mathcal A}{\pi} = |\mathcal A|^{-1}\cdot\left(\sum_{A\in\mathcal A}
    \frac{\mathrm{diam}(\pi(A))}{\mathrm{diam}(A)}\right),
\end{equation}
and
\begin{equation}\label{Eq:Stretch}
    \stre{*}{\mathcal A}{\pi} = \left(\prod_{A\in\mathcal
    A}\frac{\mathrm{diam}(\pi(A))}{\mathrm{diam}(A)}\right)^{1/|\mathcal A|}.
\end{equation}
We call both formulas the \emph{stretch of $\pi$ with respect to $\mathcal
A$}. Formula \eqref{Eq:Stretch}, which gives equal weight to relative
stretching and shrinking, is merely the multiplicative version of
\eqref{Eq:AStretch}.

Since the formulas \eqref{Eq:AStretch}, \eqref{Eq:Stretch} emphasize average
stretch instead of extreme stretch, they become trivial when $\mathcal A=2^N$,
$\mathcal A=\{\{i,j\};\; i<j\in N\}$, etc. However, they are not meaningless.
For instance, when $n=3$ and $\mathcal A=\{\{1,2\},\{2,3\}\}$, we have
$\stre{+}{\mathcal A}{(1,3,2)} = 3/2> 1=\stre{+}{\mathcal A}{\mathrm{id}}$ and
$\stre{*}{\mathcal A}{(1,3,2)} = \sqrt{2} > 1 = \stre{*}{\mathcal
A}{\mathrm{id}}$, as one would expect.

It appears to be hopelessly complicated to analyze $\mathrm{s}^+$ and
$\mathrm{s}^*$ for an arbitrary collection $\mathcal A$. We therefore focus on
stretching with respect to $\mathcal B=\{\{i,i+1\};\;1\le i<n\}$.

Roughly speaking, the additive formula \eqref{Eq:AStretch} with $\mathcal
A=\mathcal B$ is maximized by any permutation that starts in the middle of the
interval $N$ and keeps oscillating between the two halves of $N$. The
multiplicative formula \eqref{Eq:Stretch} with $\mathcal A=\mathcal B$ leads to
a much more intricate solution. The maximum of $\mathrm{s}^*$ is
\begin{align*}
    &(m^m m^{m-1})^{1/(n-1)},&\text{when $n=2m$, and}\\
    &(m^m (m+1)(m+2)^{m-1})^{1/(n-1)},&\text{when $n=2m+1$}.
\end{align*}
(See Acknowledgement.) Furthermore, the maximum is attained by two permutations
when $n$ is even, and by four permutations when $n>1$ is odd.

\section{Displacement}


\subsection{Average displacement}

We are first going to determine the average value of $\disp{\pi}$ over all
permutations $\pi\in S_n$. The formula \eqref{Eq:Avg} can be obtained by
combining Theorems 2 and 4 of \cite{GMBC} but our proof is shorter and more
straightforward.

\begin{theorem}\label{Pr:Avg}
Let $n\ge 1$ be an integer. Then
\begin{equation}\label{Eq:Avg}
    \frac{1}{n!}\sum_{\pi\in S_n}\disp{\pi} = \frac{n^2-1}{3n}.
\end{equation}
\end{theorem}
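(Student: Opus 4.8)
The plan is to exchange the order of summation and exploit the symmetry of the uniform distribution on $S_n$. Expanding the definition \eqref{Eq:Displacement}, I would write
\[
    \frac{1}{n!}\sum_{\pi\in S_n}\disp{\pi}
    = \frac{1}{n\cdot n!}\sum_{i=1}^n\sum_{\pi\in S_n}|i-\pi(i)|.
\]
The key observation is that for each fixed index $i$, as $\pi$ runs over all of $S_n$, the value $\pi(i)$ equals each $j\in\{1,\dots,n\}$ exactly $(n-1)!$ times, because fixing one value of a permutation leaves a bijection between the remaining $n-1$ points of the domain and codomain. Hence the inner sum collapses to $(n-1)!\sum_{j=1}^n|i-j|$, and the whole expression reduces to
\[
    \frac{(n-1)!}{n\cdot n!}\sum_{i=1}^n\sum_{j=1}^n|i-j|
    = \frac{1}{n^2}\sum_{i=1}^n\sum_{j=1}^n|i-j|.
\]

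The remaining task is purely arithmetic: evaluate the double sum $T_n=\sum_{i,j}|i-j|$. I would group the ordered pairs $(i,j)$ by their difference $d=|i-j|$, noting that each value $d\in\{1,\dots,n-1\}$ arises from exactly $2(n-d)$ such pairs, so that $T_n = 2\sum_{d=1}^{n-1}d(n-d)$. Splitting this as $2\bigl(n\sum_{d=1}^{n-1}d - \sum_{d=1}^{n-1}d^2\bigr)$ and inserting the standard closed forms $\sum_{d=1}^{n-1}d=n(n-1)/2$ and $\sum_{d=1}^{n-1}d^2=(n-1)n(2n-1)/6$, one factors out $n(n-1)/6$ to obtain $T_n = n(n^2-1)/3$. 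Substituting back gives
\[
    \frac{1}{n^2}\cdot\frac{n(n^2-1)}{3} = \frac{n^2-1}{3n},
\]
which is the claimed identity \eqref{Eq:Avg}.

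I do not anticipate any genuine obstacle: the argument is a single application of double counting followed by a routine summation. The only point deserving care is the counting step establishing that $\pi(i)=j$ holds for exactly $(n-1)!$ permutations, but this is immediate. As sanity checks, the formula gives $0$ at $n=1$ and $1/2$ at $n=2$, both of which agree with direct computation.
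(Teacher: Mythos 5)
Your proof is correct and follows essentially the same route as the paper: exchange the order of summation, use the fact that $\pi(i)=j$ for exactly $(n-1)!$ permutations, and finish with elementary summation formulas. The only (immaterial) difference is that you evaluate $\sum_{i,j}|i-j|$ by grouping pairs along diagonals $|i-j|=d$, whereas the paper sums over $j$ for each fixed $i$ and then uses the sum-of-squares identity.
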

\begin{proof}
Pick $m\in N$. Since the number of permutations $\pi\in S_n$ mapping $m$ onto
some $m'$ is equal to $(n-1)!$, we have
\begin{displaymath}
    \frac{1}{n!}\sum_{\pi\in S_n}|m-\pi(m)| = \frac{((m-1)+\cdots + 1) +
    (1 + \cdots + (n-m))}{n}.
\end{displaymath}
Thus
\begin{align*}
    \frac{1}{n!}\sum_{\pi\in S_n}\disp{\pi}&=
        \frac{1}{n!}\sum_{\pi\in S_n}\frac{1}{n}\sum_{m=1}^n|m-\pi(m)|
        =\frac{1}{n}\sum_{m=1}^n\frac{1}{n!}\sum_{\pi\in S_n}|m-\pi(m)|\\
    &=\frac{1}{n}\sum_{m=1}^n\frac{(m-1)m+(n-m)(n-m+1)}{2n}\\
    &=\frac{1}{n}\sum_{m=1}^n \frac{(n-m)^2+(m-1)^2+n-1}{2n}.
\end{align*}
We now note that
\begin{displaymath}
    \sum_{m=1}^n (n-m)^2=\frac{(n-1)n(2n-1)}{6}=\sum_{m=1}^n (m-1)^2,
\end{displaymath}
and the result follows.
\end{proof}

The average displacement over all permutations from $S_n$ is therefore about
$n/3$. Asymptotically:

\begin{corollary}\label{Cr:LimDp}
We have
\begin{displaymath}
    \lim_{n\to\infty}\frac{1}{n}\cdot\frac{1}{n!}\sum_{\pi\in S_n}
    \disp{\pi} = \frac{1}{3}.
\end{displaymath}
\end{corollary}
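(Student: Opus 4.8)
The plan is to read this off directly from Theorem~\ref{Pr:Avg}, since the corollary is essentially just that theorem divided by $n$, followed by a limit. The substance of the work—computing the exact average displacement—has already been done, so the only remaining task is a routine asymptotic evaluation of a rational function.

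Concretely, I would substitute the closed form from \eqref{Eq:Avg} into the quantity whose limit is sought. This gives
\begin{displaymath}
    \frac{1}{n}\cdot\frac{1}{n!}\sum_{\pi\in S_n}\disp{\pi}
    = \frac{1}{n}\cdot\frac{n^2-1}{3n}
    = \frac{n^2-1}{3n^2}
    = \frac{1}{3} - \frac{1}{3n^2}.
\end{displaymath}
The error term $1/(3n^2)$ tends to $0$ as $n\to\infty$, so the expression converges to $1/3$, as claimed. I expect no genuine obstacle here: the result is an immediate consequence of Theorem~\ref{Pr:Avg}, and the single ``step'' is recognizing that the leading behavior of $(n^2-1)/(3n^2)$ is $1/3$. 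The only thing worth stating cleanly is the rewriting above, which also exhibits the rate of convergence and confirms that the average \emph{normalized} displacement approaches $1/3$ from below.
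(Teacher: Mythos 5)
Your proposal is correct and matches the paper exactly: the corollary is stated there as an immediate consequence of Theorem~\ref{Pr:Avg} with no separate proof, since $\frac{1}{n}\cdot\frac{n^2-1}{3n}=\frac{n^2-1}{3n^2}\to\frac{1}{3}$. Your explicit rewriting as $\frac{1}{3}-\frac{1}{3n^2}$ is a nice touch that also records the rate of convergence.
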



\subsection{Extreme displacement}

\noindent The minimal displacement $\disp{\pi}=0$ is attained by exactly one
permutation---the identity permutation. The dual question concerning maximal
displacement is more interesting.

Let us call a permutation $\pi\in S_n$ \emph{crossing} if for every $i$, $j$ in
$N$ the two closed intervals $[i,\pi(i)]$, $[j,\pi(j)]$ intersect (possibly at
a single point). Otherwise, $\pi$ is said to be \emph{noncrossing}.

\setlength{\unitlength}{1.1mm}
\begin{figure}[ht]
    \begin{small}
    \centering
    \input{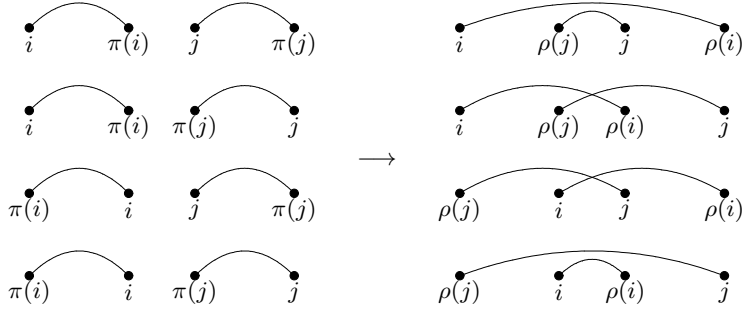}
    \end{small}
    \caption[]{Increasing displacement of noncrossing permutations.}
    \label{Fg:Noncrossing}
\end{figure}

\begin{lemma}\label{Lm:Increase}
Let $\pi\in S_n$ be a noncrossing permutation. Then there is $\rho\in S_n$
with $\disp{\rho}>\disp{\pi}$.
\end{lemma}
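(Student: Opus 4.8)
The plan is to exploit the defining failure of the crossing condition. Since $\pi$ is noncrossing, there exist indices $i\ne j$ in $N$ for which the closed intervals $[i,\pi(i)]$ and $[j,\pi(j)]$ are disjoint. I would fix such a pair and define $\rho\in S_n$ to be the permutation obtained from $\pi$ by exchanging the two images: $\rho(i)=\pi(j)$, $\rho(j)=\pi(i)$, and $\rho(k)=\pi(k)$ for all $k\notin\{i,j\}$. Since $\rho$ is just $\pi$ followed by the transposition interchanging $\pi(i)$ and $\pi(j)$, it is again a permutation, and this is precisely the reconnection illustrated in Figure~\ref{Fg:Noncrossing}.

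Next, because $\rho$ and $\pi$ agree outside $\{i,j\}$, formula \eqref{Eq:Displacement} shows that the only change in displacement comes from those two coordinates:
\[
    n\bigl(\disp{\rho}-\disp{\pi}\bigr)
    =\bigl(|i-\pi(j)|+|j-\pi(i)|\bigr)-\bigl(|i-\pi(i)|+|j-\pi(j)|\bigr).
\]
So it suffices to show that the right-hand side is strictly positive.

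The computation simplifies once the absolute values are removed, and this is where disjointness does the work. Writing $\ell_1\le r_1$ for the endpoints of $[i,\pi(i)]$ and $\ell_2\le r_2$ for those of $[j,\pi(j)]$, disjointness means $r_1<\ell_2$ or $r_2<\ell_1$; by the symmetry of $i$ and $j$ I may assume $r_1<\ell_2$. Then both of $i,\pi(i)$ lie strictly to the left of both of $j,\pi(j)$, so each absolute value opens with its larger term first. The key point is that the endpoint-sums $i+\pi(i)=\ell_1+r_1$ and $j+\pi(j)=\ell_2+r_2$ are independent of the orientation of the two arrows, so the expression collapses, for every orientation, to $(\ell_2+r_2)-(\ell_1+r_1)-\bigl((r_1-\ell_1)+(r_2-\ell_2)\bigr)=2(\ell_2-r_1)$. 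Hence $\disp{\rho}-\disp{\pi}=2(\ell_2-r_1)/n>0$, since $r_1<\ell_2$.

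I expect essentially no serious obstacle here: the only point requiring care is organizing the removal of the absolute values so that the four possible orientations of the two arrows are handled uniformly. The observation that the endpoint-sums $\ell_1+r_1$ and $\ell_2+r_2$ do not depend on orientation is exactly what avoids a tedious four-way case split and reduces the argument to a one-line computation.
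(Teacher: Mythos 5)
Your proposal is correct and follows the paper's proof essentially verbatim: the same swap $\rho=\pi\circ(i,j)$ and the same identity, since your $2(\ell_2-r_1)$ is exactly the paper's gain term $2(\min\{j,\pi(j)\}-\max\{i,\pi(i)\})$. The only difference is that you carry out explicitly the absolute-value computation that the paper delegates to Figure~\ref{Fg:Noncrossing}.
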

\begin{proof}
Since $\pi$ is noncrossing, there are $i<j$ in $N$ such that the intervals
$[i,\pi(i)]$, $[j,\pi(j)]$ are disjoint. Let $\rho = \pi\circ (i,j)$, where
the transposition $(i,j)$ is applied first. Then
\begin{displaymath}
    |i-\rho(i)|+|j-\rho(j)| = |i-\pi(i)|+|j-\pi(j)|
        +2(\min\{j,\pi(j)\}-\max\{i,\pi(i)\}),
\end{displaymath}
which is perhaps best apparent from Figure \ref{Fg:Noncrossing}. Since $i<j$
and $\pi$ is noncrossing, the term $\min\{j,\pi(j)\}-\max\{i,\pi(i)\}$ is
positive, proving that $\disp{\rho}>\disp{\pi}$.
\end{proof}

Now when we have seen that only crossing permutations can attain maximal
displacement, we characterize them.

\begin{lemma}\label{Lm:Crossing} Let $\pi\in S_n$.
If $n=2m$ then $\pi$ is crossing if and only if it maps $\{1$, $\dots$, $m\}$
onto $\{m+1$, $\dots$, $n\}$. If $n=2m+1$ then $\pi$ is crossing if and only
if it maps $\{1,\dots,m\}$ to $\{m+1,\dots,n\}$ and $\{m+2,\dots,n\}$ to
$\{1,\dots,m+1\}$.
\end{lemma}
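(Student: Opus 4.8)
The plan is to reduce the defining pairwise-intersection property of a crossing permutation to the existence of a \emph{single} common point of all the intervals $[\,\min\{i,\pi(i)\},\,\max\{i,\pi(i)\}\,]$, and then to read off the stated set equalities by counting. First I would record the one-dimensional Helly fact: closed real intervals $[a_1,b_1],\dots,[a_n,b_n]$ meet pairwise if and only if $\max_i a_i\le\min_i b_i$, in which case every point of $[\max_i a_i,\min_i b_i]$ lies in all of them. (Indeed, if $a_p=\max_i a_i$ and $b_q=\min_i b_i$, then the intervals indexed by $p$ and $q$ meet, so $a_p\le b_q$.) Taking $a_i=\min\{i,\pi(i)\}$ and $b_i=\max\{i,\pi(i)\}$, I conclude that $\pi$ is crossing if and only if the integer $c:=\max_i\min\{i,\pi(i)\}$ lies in every interval; equivalently, $\pi(i)\ge c$ whenever $i<c$ and $\pi(i)\le c$ whenever $i>c$.

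Next I would pin down $c$ by counting. The condition above says that $\pi$ maps $\{1,\dots,c-1\}$ injectively into $\{c,\dots,n\}$ and maps $\{c+1,\dots,n\}$ injectively into $\{1,\dots,c\}$, which forces $c-1\le n-c+1$ and $n-c\le c$, i.e.\ $n/2\le c\le n/2+1$. When $n=2m$ this leaves $c\in\{m,m+1\}$; in either case one of the two injections runs between sets of equal size $m$, hence is a bijection, and yields $\pi(\{1,\dots,m\})=\{m+1,\dots,n\}$. When $n=2m+1$ the only integer in the range is $c=m+1$, and the two injections become precisely $\pi(\{1,\dots,m\})\subseteq\{m+1,\dots,n\}$ and $\pi(\{m+2,\dots,n\})\subseteq\{1,\dots,m+1\}$, which are the asserted conditions.

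For the converse I would exhibit the common point directly. If $n=2m$ and $\pi(\{1,\dots,m\})=\{m+1,\dots,n\}$, then $m\in[i,\pi(i)]$ for every $i$ (for $i\le m$ because $i\le m<m+1\le\pi(i)$, and for $i\ge m+1$ because $\pi(i)\le m<i$); if $n=2m+1$ and the two inclusions hold, the same check puts $m+1$ in every interval. A common point makes all the intervals meet pairwise, so $\pi$ is crossing.

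The only genuinely fiddly point I anticipate is the even-order bookkeeping---verifying that both admissible values $c=m$ and $c=m+1$ produce the one set equality $\pi(\{1,\dots,m\})=\{m+1,\dots,n\}$---rather than anything structural. Everything else is the pair of counting inequalities together with the elementary ``an injection between equal finite sets is a bijection'' step, which I expect to be routine.
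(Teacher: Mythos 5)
Your proof is correct, and it takes a genuinely different route from the paper's. The paper argues separately by parity and directly from the pairwise condition: in the even case it uses the pigeon-hole principle to show that a crossing permutation cannot send any point of $\{1,\dots,m\}$ into $\{1,\dots,m\}$ (two ``same-half'' intervals would be disjoint), and in the odd case it splits on whether $\pi(m+1)\ge m+1$ or $\pi(m+1)\le m+1$ and again forces the half-to-half mapping by a counting/pigeon-hole step; the converses are checked by looking at pairs of intervals. You instead invoke the one-dimensional Helly property---pairwise intersecting closed intervals have a common point $c$, which may be taken to be $\max_i\min\{i,\pi(i)\}$---and thereby replace the pairwise condition with the existence of a single pivot $c$ such that $\pi$ sends $\{1,\dots,c-1\}$ above $c$ and $\{c+1,\dots,n\}$ below $c$; the two counting inequalities $c-1\le n-c+1$ and $n-c\le c$ then pin down $c$ uniformly in both parities ($c\in\{m,m+1\}$ for $n=2m$, $c=m+1$ for $n=2m+1$), and the set equalities follow from ``an injection between equal finite sets is a bijection'' plus complementation. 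What your route buys is a unified treatment of the even and odd cases and an explicit structural picture (all intervals $[i,\pi(i)]$ share a pivot) that the paper only uses implicitly in its converse directions; it also immediately yields the remark the paper needs later (e.g.\ that $\pi(m+1)>m+1$ forces $\pi^{-1}(m+1)<m+1$ for crossing $\pi$). What the paper's route buys is that it stays entirely at the level of pairs of intervals, so no auxiliary intersection lemma needs to be stated, at the cost of two parallel case analyses.
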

\begin{proof}
Suppose first that $n=2m$. Assume that $\pi$ is crossing. If there is
$i\in\{1,\dots,m\}$ with $\pi(i)\in\{1,\dots,m\}$ then, by the pigeon-hole
principle, there must also be $j\in\{m+1,\dots,n\}$ with
$\pi(j)\in\{m+1,\dots,n\}$. But then the points $i$, $j$ and their images
$\pi(i)$, $\pi(j)$ witness that $\pi$ is noncrossing, a contradiction.
Conversely, every permutation $\pi$ mapping $\{1,\dots,m\}$ onto
$\{m+1,\dots,n\}$ must also map $\{m+1,\dots,n\}$ onto $\{1,\dots,m\}$, and
hence is a crossing permutation.

Now suppose that $n=2m+1$. Assume that $\pi$ is crossing and that
$\pi(m+1)\ge m+1$. Then the image of $\{1,\dots,m\}$ must be contained in
$\{m+1,\dots,n\}$, which forces $\pi$ to map $\{m+2,\dots,n\}$ onto
$\{1,\dots,m\}$. Similarly when $\pi$ is crossing and $\pi(m+1)\le m+1$.
Conversely, assume that $\pi$ maps $\{1,\dots,m\}$ to $\{m+1,\dots,n\}$ and
$\{m+2,\dots,n\}$ to $\{1,\dots,m+1\}$. Looking at two points at a time, it
is easy to see that $\pi$ is crossing.
\end{proof}

Note that the odd case of Lemma \ref{Lm:Crossing} imposes no restriction on the
image of the midpoint $m+1$. Nevertheless, once $m+1$ is mapped somewhere,
condition (ii) of Lemma \ref{Lm:Crossing} forces $\pi$ to behave in a certain
way. For instance, when $\pi(m+1)>m+1$, it follows that $\pi^{-1}(m+1)<m+1$. We
will need this fact in the next theorem.

\begin{theorem}\label{Th:Extreme}
Given $n\ge 1$, let $d_n = \max\{\disp{\pi};\;\pi\in S_n\}$, and $D_n =
\{\pi\in S_n;\;\disp{\pi}=d_n\}$. Then $\pi\in D_n$ if and only if $\pi$ is
crossing. Moreover, $d_n=n/2$ when $n$ is even, and $d_n=(n-1)(n+1)(2n)^{-1}$
when $n$ is odd.
\end{theorem}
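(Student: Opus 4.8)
The plan is to combine the two preceding lemmas with a direct computation showing that all crossing permutations share one and the same displacement. First I would record the easy half: by Lemma~\ref{Lm:Increase}, every noncrossing permutation is beaten by some other permutation, so no noncrossing permutation can lie in $D_n$; equivalently, every element of $D_n$ is crossing. Since $S_n$ is finite, $d_n$ is attained, and crossing permutations certainly exist (for instance the reversal $i\mapsto n+1-i$), so it remains to evaluate $\sum_{i=1}^n|i-\pi(i)|$ for an arbitrary crossing $\pi$ and to check that the value does not depend on the choice of $\pi$. Once this common value $c$ is found, the logic closes itself: every maximizer is crossing and every crossing permutation has displacement $c/n$, so $d_n=c/n$ and $D_n$ is exactly the set of crossing permutations.

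For the even case $n=2m$, Lemma~\ref{Lm:Crossing} says $\pi$ maps $\{1,\dots,m\}$ onto $\{m+1,\dots,n\}$ and conversely, so $\pi(i)>i$ for $i\le m$ and $\pi(i)<i$ for $i>m$. Hence the absolute values can be removed with a fixed sign pattern, and
\[
\sum_{i=1}^n|i-\pi(i)| = \Bigl(\sum_{i\le m}-\sum_{i>m}\Bigr)\pi(i) - \Bigl(\sum_{i\le m}-\sum_{i>m}\Bigr)i .
\]
Both bracketed quantities are evaluated from the known value-sets $\pi(\{1,\dots,m\})=\{m+1,\dots,n\}$ and $\pi(\{m+1,\dots,n\})=\{1,\dots,m\}$, each contributing $\pm m^2$, so the total is $2m^2$ independently of the particular crossing $\pi$. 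Dividing by $n=2m$ yields $d_n=m=n/2$.

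The odd case $n=2m+1$ is the real obstacle, because the midpoint $m+1$ is unconstrained by Lemma~\ref{Lm:Crossing} and may be displaced in either direction. Here I would split according to the sign of $\pi(m+1)-(m+1)$ and invoke the remark following Lemma~\ref{Lm:Crossing}: when $\pi(m+1)>m+1$ the value $m+1$ must be the image of a point below $m+1$, which together with a counting argument forces $\pi(\{1,\dots,m\})=\{m+1,\dots,n\}\setminus\{\pi(m+1)\}$ and $\pi(\{m+2,\dots,n\})=\{1,\dots,m\}$; the case $\pi(m+1)<m+1$ is dual, and $\pi(m+1)=m+1$ is a degenerate boundary case. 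In each subcase $\sum_{i=1}^n|i-\pi(i)|$ can again be written with a fixed sign pattern off the midpoint, and the one free quantity $\pi(m+1)$ enters the off-midpoint sum with the opposite sign to its appearance in the midpoint term $|m+1-\pi(m+1)|$.

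The key point I expect to verify is that these two occurrences of $\pi(m+1)$ cancel exactly, leaving the constant $2m(m+1)$ in every subcase; the remaining arithmetic is routine once the value-sets are identified. Dividing by $n=2m+1$ then gives $d_n=2m(m+1)/(2m+1)=(n-1)(n+1)(2n)^{-1}$, matching the claimed formula. Since every crossing permutation attains this common value while every maximizer is crossing, the value is exactly $d_n$ and $D_n$ is precisely the set of crossing permutations, which establishes both assertions of the theorem.
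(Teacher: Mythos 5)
Your proposal is correct and follows essentially the same route as the paper: Lemma~\ref{Lm:Increase} rules out noncrossing maximizers, and Lemma~\ref{Lm:Crossing} fixes the sign pattern so that every crossing permutation has the same displacement sum ($2m^2$ in the even case, $2m(m+1)$ in the odd case), which closes the argument exactly as you describe. The only difference is presentational: in the odd case the paper first performs a surgery replacing $\pi$ by a displacement-equal crossing permutation fixing $m+1$ and then computes, whereas you compute directly and observe that the two occurrences of $\pi(m+1)$ cancel --- but that cancellation is precisely what makes the paper's surgery displacement-preserving, and both rest on the same remark that $\pi(m+1)>m+1$ forces $\pi^{-1}(m+1)<m+1$.
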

\begin{proof}
Suppose that $n=2m$, and let $\pi\in S_n$ be a crossing permutation. By Lemma
\ref{Lm:Crossing}, $\pi$ maps $\{1,\dots,m\}$ onto $\{m+1,\dots,n\}$ and vice
versa. Therefore
\begin{eqnarray*}
    n\disp{\pi}&=&\sum_{i=1}^m|i-\pi(i)|+\sum_{i=m+1}^n|i-\pi(i)|\\
    &=&\sum_{i=1}^m(\pi(i)-i)+\sum_{i=m+1}^n(i-\pi(i))\\
    &=&2\left(\sum_{i=m+1}^n i -\sum_{i=1}^m i\right)
    =2\left(\frac{n(n+1)}{2}-2\cdot\frac{m(m+1)}{2}\right)
    =\frac{n^2}{2}.
\end{eqnarray*}
This short calculation proves that, as far as $\pi$ is crossing, the value of
$\disp{\pi}$ is independent of $\pi$ and is equal to $n/2$. The set $D_n$
then coincides with crossing permutations by Lemma \ref{Lm:Increase}, and
$d_n=n/2$ follows.


Suppose that $n=2m+1$, and let $\pi\in S_n$ be a crossing permutation. If
$\pi(m+1)\ne m+1$, we construct a crossing permutation $\rho$ with
$\rho(m+1)=m+1$ satisfying $\disp{\rho}=\disp{\pi}$ as follows: Without loss of
generality, suppose $c=\pi(m+1)>m+1$. Then $a=\pi^{-1}(m+1)<m+1$, as we have
remarked before this theorem. Let $\rho(a)=c$, $\rho(c)=a$, $\rho(m+1)=m+1$ and
$\rho(k)=\pi(k)$ for $k\not\in\{a,m+1,c\}$.
By the construction, $\disp{\pi}=\disp{\rho}$.

We can therefore assume that the crossing permutation $\pi$ fixes $m+1$.
Then, by Lemma \ref{Lm:Crossing},
\begin{eqnarray*}
    n\disp{\pi}&=&\sum_{i=1}^m(\pi(i)-i)+\sum_{i=m+2}^n(i-\pi(i))\\
    &=&2\left(\sum_{i=m+2}^n i - \sum_{i=1}^m i\right) =
    2m(m+1)=\frac{(n-1)(n+1)}{2}.
\end{eqnarray*}
As in the even case, we see that the value of $\disp{\pi}$ does not depend on
$\pi$, that $D_n$ consists exactly of all crossing permutations, and that
$d_n=(n-1)(n+1)(2n)^{-1}$.
\end{proof}


\subsection{Distribution of displacements}

\noindent The reader may wish to select a permutation $\pi$ of length $n=1000$
at random and calculate its displacement $\disp{\pi}$. We predict that
$330<\disp{\pi}<336$. We could be wrong, of course, as there are permutations
with displacement ranging from $0$ to $n/2$. Using the characterization of
permutations with maximal displacement (Lemma \ref{Lm:Crossing}), we count
exactly $(m!)^2$ such permutations in the even case $n=2m$. The ratio
$(2m)!/((m!)^2)$ approaches $0$ exponentially fast, so such permutations are
rare. This is an instance of a much more general notion known to measure
theorists as \emph{concentration of measure phenomena}. Let us talk about it
briefly, imitating \cite[Ch.\ 6]{MS}.

Let $(X,\rho,\mu)$ be a metric space equipped with a Borel probability measure
$\mu$. For a subset $A$ of $X$ and $\varepsilon>0$ define $A_\varepsilon=\{x\in
X;\;\rho(x,A)\le\varepsilon\}$, where $\rho(x,A)$ is the distance of $x$ from
the set $A$. The \emph{concentration function} $\alpha(X,\ ):\mathbb{R^+}\to
\mathbb R^+_0$ is defined by
\begin{displaymath}
    \alpha(X,\varepsilon) = 1-\inf\{\mu(A_\varepsilon);\;A\subseteq X, A\text{
    is Borel}, \mu(A)\ge 1/2\}.
\end{displaymath}
In words, $\alpha(X,\varepsilon)$ measures how much space remains in $X$ when
one half of $X$ is inflated by $\varepsilon$.

Let $\mathcal X = \{(X_n,\rho_n,\mu_n);\; n = 1$, $2$, $\dots\}$ be a family
of metric probability spaces. Then $\mathcal X$ is called a \emph{normal Levy
family} with constants $c_1$, $c_2$ if for every $\varepsilon>0$ and for
every $n$ we have $\alpha(X_n,\varepsilon)\le c_1e^{-c_2\varepsilon^2n}$.

Let $\rho_n$ be the (normalized Hamming) metric on $S_n$ defined by
\begin{displaymath}
    \rho_n(\pi,\sigma)=\frac{1}{n}|\{i;\;\pi(i)\ne\sigma(i)\}|,
\end{displaymath}
and let $\mu_n$ be the (normalized counting) measure on $S_n$ defined by
\begin{displaymath}
    \mu_n(\pi) = \frac{1}{n!}.
\end{displaymath}
Then $\{(S_n,\rho_n,\mu_n)\}$ is a normal Levy family with constants $c_1=2$,
$c_2=1/64$, according to \cite[Sec.\ 6.4]{MS}.

Although the defining condition for normal Levy families only restricts the
interplay of the measure and the metric in $(X_n,\rho_n,\mu_n)$, one can say
a lot about the behavior of reasonable functions $f_n:X_n\to \mathbb R$. We
will assume here that $f_n$ is Lipschitz with constant $1$ (i.e.,
$|f_n(x)-f_n(y)|\le \rho_n(x,y)$ for every $x$, $y\in X_n$), but a more
general requirement would do (cf.\ \cite{MS}).

So, assume that $f:(X,\rho,\mu)\to \mathbb R$ is Lipschitz with constant $1$.
Denote by $M_f$ the median value of $f$ on $X$, and let $A=\{x\in X;\;f(x)\le
M_f\}$, $B=\{x\in X;\;f(x)\ge M_f\}$. Then, by definition, $\mu(A)\ge 1/2$,
$\mu(B)\ge 1/2$, and $\mu(\{x\in X;\;|f(x)-M_f|\le
\varepsilon\}|\ge\mu(A_\varepsilon \cap B_\varepsilon)\ge
1-2\alpha(X,\varepsilon)$. When $X=X_n$ is a member of a normal Levy family,
we thus obtain
\begin{displaymath}
    \mu(\{x\in X_n;\;|f(x)-M_f|\le \varepsilon\})\ge
    1-2c_1e^{-c_2\varepsilon^2n}.
\end{displaymath}
When $X_n=S_n$ is equipped with the above metric and measure, we get
\begin{displaymath}
    \mu(\{x\in X_n;\;|f(x)-M_f|\le \varepsilon\})\ge
    1-4e^{-\varepsilon^2n/64}.
\end{displaymath}
This inequality explains why the values of $f$ on $S_n$ are packed near the
median. Moreover, with such a spike in the distribution, \emph{the median
will be close to the average value of $f$}.

We are about to clinch the argument with the following observation:

\begin{proposition} Let $(S_n,\rho_n,\mu_n)$ be as above. Then all functions
$f_n:S_n\to \mathbb R$ defined by $f_n(\pi)=\disp{\pi}/n$ are Lipschitz with
constant $1$.
\end{proposition}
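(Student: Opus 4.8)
The plan is to unwind the two hidden normalizations in $f_n$ and then estimate the difference of displacements index by index. Writing out the definitions, we have $f_n(\pi)=\disp{\pi}/n=\frac{1}{n^2}\sum_{i=1}^n|i-\pi(i)|$, so for any two permutations $\pi,\sigma\in S_n$,
\[
    f_n(\pi)-f_n(\sigma) = \frac{1}{n^2}\sum_{i=1}^n\bigl(|i-\pi(i)|-|i-\sigma(i)|\bigr).
\]
First I would observe that every index $i$ with $\pi(i)=\sigma(i)$ contributes a zero summand, so the sum may be restricted to the set $D=\{i;\;\pi(i)\ne\sigma(i)\}$, whose cardinality is exactly $n\,\rho_n(\pi,\sigma)$ by the definition of the normalized Hamming metric.

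Next I would bound each surviving summand. Since $i$, $\pi(i)$, $\sigma(i)$ all lie in $\{1,\dots,n\}$, both $|i-\pi(i)|$ and $|i-\sigma(i)|$ lie in the interval $[0,n-1]$, so the absolute value of their difference is at most $n-1$. Summing over $D$ and applying the triangle inequality yields
\[
    |f_n(\pi)-f_n(\sigma)| \le \frac{1}{n^2}\sum_{i\in D}\bigl||i-\pi(i)|-|i-\sigma(i)|\bigr| \le \frac{(n-1)|D|}{n^2} = \frac{n-1}{n}\,\rho_n(\pi,\sigma) \le \rho_n(\pi,\sigma),
\]
which is precisely the Lipschitz condition with constant $1$.

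There is no genuine obstacle here; the only points to get right are the bookkeeping of the two factors of $1/n$ buried in $f_n$ and the elementary fact that each delay $|i-\pi(i)|$ is at most $n-1<n$. I would note in passing that the argument actually produces the sharper constant $(n-1)/n$, so the Lipschitz bound is never attained, but the weaker statement with constant $1$ is all that is needed to feed into the concentration estimate $\mu_n(\{\pi;\;|f_n(\pi)-M_{f_n}|\le\varepsilon\})\ge 1-4e^{-\varepsilon^2 n/64}$ derived above.
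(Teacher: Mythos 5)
Your proof is correct and follows essentially the same route as the paper's: expand $f_n(\pi)-f_n(\sigma)$ index by index, observe that only the indices where $\pi$ and $\sigma$ disagree contribute, bound each such contribution by roughly $n$, and divide by $n^2$ to recover $\rho_n(\pi,\sigma)$. The only cosmetic difference is the per-term estimate: the paper uses the reverse triangle inequality $\bigl||i-\pi(i)|-|i-\sigma(i)|\bigr|\le|\pi(i)-\sigma(i)|\le n$, while you bound each difference directly by $n-1$ since both delays lie in $[0,n-1]$, which gives the marginally sharper constant $(n-1)/n$ but the same conclusion.
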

\begin{proof}
Let $\pi$, $\sigma$ be two permutations in $S_n$. Then
\begin{align*}
    \frac{1}{n}|\disp{\pi}-\disp{\sigma}|&=\frac{1}{n^2}\left|
            \sum_{i=1}^n|i-\pi(i)|-\sum_{i=1}^n|i-\sigma(i)|\right|\\
       &\le\frac{1}{n^2}\left|\sum_{i=1}^n|i-\pi(i)-i+\sigma(i)|\right|
            =\frac{1}{n^2}\sum_{i=1}^n|\pi(i)-\sigma(i)|\\
       &\le\frac{1}{n^2}\cdot n\cdot
            |\{i;\pi(i)\ne\sigma(i)\}| = \rho_n(\pi,\sigma),
\end{align*}
and we are through.
\end{proof}


\subsection{Prescribed displacement}

\noindent Since $S_n$ is finite, the values of $\disp{\pi}/n$ for a fixed $n$
cannot cover the interval $[0,1/2]$. However, we can get arbitrarily close to
any value in $[0,1/2]$ if we allow $n$ to be sufficiently large; as we are
going to show.

The idea is to leave $\pi$ identical on a certain proportion of $N$ and
displace the remaining points as much as possible.

\begin{proposition} Let $d$ be such that $0\le d\le 1/2$. Then there is a sequence of
permutations $\pi_n\in S_n$ such that $\lim_{n\to\infty}\disp{\pi_n}/n = d$.
\end{proposition}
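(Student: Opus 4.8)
The plan is to build each $\pi_n$ by splicing a maximally displacing block onto a fixed tail, and then to tune the relative size of the block so that the normalized displacement converges to the target $d$. Concretely, on a block $\{1,\dots,k\}$ of \emph{even} size $k=2\ell$ I would let $\pi_n$ act as the crossing involution $i\mapsto i+\ell$ for $1\le i\le\ell$ and $i\mapsto i-\ell$ for $\ell<i\le k$, while fixing every point of $\{k+1,\dots,n\}$. This realizes the stated idea: the block is displaced as much as possible (each of its points moves by $\ell$), and the rest of $N$ is left alone.

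With this choice the displacement decomposes cleanly. Either by a direct count (each of the $k$ block points contributes exactly $\ell$) or by applying Theorem~\ref{Th:Extreme} to the block, which is a crossing permutation on a set of even size $k$, the block contributes $\sum_{i=1}^{k}|i-\pi_n(i)|=k\ell=k^2/2$, while the fixed tail contributes nothing. Recalling that $\disp{\pi_n}=\tfrac1n\sum_{i=1}^n|i-\pi_n(i)|$, this gives
\[
\frac{\disp{\pi_n}}{n}=\frac{1}{n^2}\sum_{i=1}^n|i-\pi_n(i)|=\frac{1}{n^2}\cdot\frac{k^2}{2}=\frac12\left(\frac{k}{n}\right)^2 .
\]
Thus the normalized displacement is governed entirely by the ratio $k/n$, and it suffices to arrange $k=k_n$ so that $k_n/n\to\sqrt{2d}$; then the right-hand side tends to $\tfrac12\cdot 2d=d$. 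Since $0\le d\le 1/2$ forces $\sqrt{2d}\le 1$, a block of size $\sqrt{2d}\,n$ does fit inside $N$, so I would take $k_n$ to be the largest even integer not exceeding $\sqrt{2d}\,n$.

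The only thing left to check is routine: that $k_n$ is a legitimate even block size with $k_n/n\to\sqrt{2d}$. For $d>0$ this holds because the largest even integer below $\sqrt{2d}\,n$ differs from $\sqrt{2d}\,n$ by at most $2$, so dividing by $n$ and letting $n\to\infty$ gives the limit $\sqrt{2d}$. The two extreme cases are immediate sanity checks rather than obstacles: when $d=0$ we get $k_n=0$ and $\pi_n=\mathrm{id}$ with normalized displacement $0$, and when $d=1/2$ we get $\sqrt{2d}=1$, so $k_n$ equals $n$ or $n-1$ and the normalized displacement tends to $1/2$, consistent with Theorem~\ref{Th:Extreme}. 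I do not expect any genuine difficulty here; the entire content of the argument is the one-line displacement computation above, and the parity/rounding bookkeeping is the only (minor) care needed.
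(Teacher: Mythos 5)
Your proposal is correct and is essentially the paper's own proof: the paper also takes an initial block of even size (there $2u_n$ with $u_n=\lceil\sqrt{2d}\,n/2\rceil$), shifts its two halves past each other by half the block length, fixes the tail, computes $\disp{\pi_n}/n=2u_n^2/n^2$, and passes to the limit. The only difference is your rounding convention (largest even integer not exceeding $\sqrt{2d}\,n$, versus the paper's ceiling), which is immaterial--in fact it slightly more cleanly guarantees the block fits inside $\{1,\dots,n\}$.
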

\begin{proof}
Let $\delta = \sqrt{2d}$, and let $u_n=\lceil \delta n/2\rceil$. Define
$\pi_n\in S_n$ as follows:
\begin{displaymath}
    \pi(i)=\left\{\begin{array}{ll}
        i+u_n,&1\le i\le u_n,\\
        i-u_n,&u_n+1\le i\le 2u_n,\\
        i,&i>2u_n.
    \end{array}\right.
\end{displaymath}
Then $\disp{\pi_n}/n = 2u_n^2/n^2 = 2\lceil \delta n/2\rceil^2/n^2$. Since both
$2(\delta n/2)^2/n^2$ and $2(\delta n/2 + 1)^2/n^2$ tend to $\delta^2/2 = d$
when $n$ approaches infinity, we are done by the Squeeze theorem.
\end{proof}

\section{Stretching with additive formula}

In this section, we answer the following question: \emph{For which permutations
$\pi\in S_n$ is $\stre{+}{\mathcal B}{\pi}$ maximal, where $\mathcal
B=\{\{i,i+1\};\;1\le i<n\}$}? Note that with this choice of $\mathcal B$ we
have
\begin{displaymath}
    \stre{+}{\mathcal B}{\pi} =
    \frac{|\pi(1)-\pi(2)|+|\pi(2)-\pi(3)|+\cdots +|\pi(n-1)-\pi(n)|}{n-1}.
\end{displaymath}

For two subsets $A$, $B$ of $N$, we say that $\pi\in S_n$ \emph{oscillates}
between $A$ and $B$ if for every $1\le i<n$ we have either $\pi(i)\in A$,
$\pi(i+1)\in B$, or $\pi(i)\in B$, $\pi(i+1)\in A$.

\begin{theorem}\label{Th:StreA}
The maximum value of $\stre{+}{\mathcal B}{\pi}$ over all $\pi\in S_n$ is
\begin{align*}
    &(2m^2-1)/(2m-1)&\text{when $n=2m$, and}\\
    &(2m^2+2m-1)/(2m)&\text{when $n=2m+1$.}
\end{align*}

When $n=2m$, the maximum is attained by precisely those permutations $\pi$ that
oscillate between $\{1,\dots,m\}$, $\{m+1,\dots,n\}$ and satisfy
$(\pi(1),\pi(n))\in\{(m,m+1)$, $(m+1,m)\}$.

When $n=2m+1$, the maximum is attained precisely by those permutations $\pi$
that oscillate between $\{1,\dots,m\}$, $\{m+1,\dots,n\}$ and satisfy
$(\pi(1),\pi(n))\in\{(m+1,m+2)$, $(m+2,m+1)\}$, and by those that oscillate
between $\{1,\dots,m+1\}$, $\{m+2,\dots,n\}$ and satisfy
$(\pi(1),\pi(n))\in\{(m,m+1)$, $(m+1,m)\}$.
\end{theorem}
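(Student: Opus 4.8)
The plan is to drop the harmless factor $1/(n-1)$ (recall $\stre{+}{\mathcal B}{\pi}=S(\pi)/(n-1)$) and maximize
$$S(\pi)=\sum_{i=1}^{n-1}|\pi(i)-\pi(i+1)|$$
over $S_n$. My main tool is a level-crossing identity. Writing $|a-b|=\#\{\ell:\min(a,b)\le\ell<\max(a,b)\}$ and summing over consecutive pairs gives
$$S(\pi)=\sum_{\ell=1}^{n-1}X_\ell,\qquad X_\ell=\#\{\,i:\text{exactly one of }\pi(i),\pi(i+1)\text{ lies in }\{1,\dots,\ell\}\,\}.$$
Thus $X_\ell$ counts the colour changes along the path $\pi(1)-\cdots-\pi(n)$ when each position is coloured according to whether its value lies below or above the cut at $\ell$.

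First I would prove an elementary colouring lemma: in a length-$n$ path with $s$ vertices of the minority colour and $n-s$ of the majority colour, the number of colour changes is at most $2s$ when $s<n-s$, and at most $2s-1=n-1$ when $s=n-s$; moreover the value $2s$ is attained exactly when the $s$ minority vertices are interior and pairwise non-adjacent, and the value $n-1$ exactly when the two colours perfectly alternate. Applying this with $s=\min(\ell,n-\ell)$ bounds each $X_\ell$. Summing the per-level bounds, the even case $n=2m$ telescopes to $S(\pi)\le 2m^2-1$, the balanced central cut $\ell=m$ contributing $2m-1$; this already matches the claimed maximum.

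For $n=2m+1$ the naive sum of the per-level bounds is $2m^2+2m$, one too large, so I must recover a $-1$ by showing the two central cuts cannot both be extremal. Here $X_m=2m$ forces both endpoints to carry values $\ge m+1$, while $X_{m+1}=2m$ forces both endpoints to carry values $\le m+1$; since only one position can hold $m+1$, at most one of $X_m,X_{m+1}$ equals $2m$, whence $X_m+X_{m+1}\le 4m-1$ and $S(\pi)\le 2m^2+2m-1$, again matching the claim.

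It remains to characterize equality, which is the substantive part. Equality forces every $X_\ell$ to be maximal (in the odd case, every $X_\ell$ maximal except that exactly one of $X_m,X_{m+1}$ falls short by one). I would first extract the oscillation: maximality at the central cut is where the alternation clause of the lemma bites, forcing the values to alternate between the two halves, so that $\pi$ oscillates between $\{1,\dots,m\}$ and $\{m+1,\dots,n\}$ in the even case, and—depending on which of $X_m,X_{m+1}$ is extremal—between $\{1,\dots,m\}$, $\{m+1,\dots,n\}$ or between $\{1,\dots,m+1\}$, $\{m+2,\dots,n\}$ in the odd case (using that $s$ non-adjacent interior minority vertices among exactly $s+1$ majority vertices must in fact alternate). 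Given oscillation, maximality at the cut $\ell=m-1$ forbids any value of $\{1,\dots,m-1\}$ at an endpoint, while maximality at the cut just above the middle forbids the corresponding top values at an endpoint; since the endpoints of an oscillating sequence already lie in prescribed halves, this pins $(\pi(1),\pi(n))$ to the stated pairs. Conversely, any $\pi$ of the listed shape makes every $X_\ell$ attain its bound, so $S(\pi)$ is maximal. \emph{The main obstacle} I anticipate is the odd-case bookkeeping: cleanly separating the two subcases $X_m=2m$ and $X_{m+1}=2m$, verifying that they produce exactly the two endpoint families asserted, and confirming that no further level's maximality imposes an extra constraint. The value-reversal symmetry $\pi\mapsto(n+1)-\pi$, which swaps the cuts $\ell\leftrightarrow n-\ell$, should let me deduce the second odd subcase from the first and keep the argument symmetric.
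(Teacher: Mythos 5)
Your proposal is correct, and it reaches the theorem by a genuinely different route than the paper. The paper expands $S(\pi)=\sum_{i<n}|\pi(i)-\pi(i+1)|$ value-by-value: each $\pi(i)$ enters the sum once (if $i\in\{1,n\}$) or twice (otherwise), with exactly $n-1$ plus and $n-1$ minus signs overall, so $S(\pi)$ is at most the largest signed sum of $2n-2$ elements of the multiset $\{1,1,2,2,\dots,n,n\}$ carrying $n-1$ minus signs; the extremal signing then forces the once-occurring (endpoint) values to be the two middle ones, forces the sign pattern---top-half values local maxima, bottom-half values local minima---which is exactly oscillation, and in the odd case yields two admissible extremal signings because deleting both copies of $m+1$ is not realizable by a permutation. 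You instead decompose level-by-level, $S(\pi)=\sum_\ell X_\ell$ with $X_\ell$ the number of consecutive pairs separated by the cut at $\ell$, bound each $X_\ell$ by a degree count ($2$ times the minority size, or $n-1$ for the balanced cut), and recover the odd case's missing $1$ by the pigeonhole fact that $X_m=X_{m+1}=2m$ would force both endpoints to carry the value $m+1$---precisely the dual of the paper's remark about the two copies of $m+1$. Your route costs a per-level lemma but buys a more mechanical equality analysis: each level's equality condition is a clean local statement (minority values interior and pairwise non-adjacent, or perfect alternation), so oscillation and the endpoint constraints are extracted rigorously, whereas the paper's ``the distribution of signs implies that $\pi$ must oscillate'' is left largely to the reader; your use of the reversal symmetry $\pi\mapsto (n+1)-\pi$ to deduce the second odd family from the first is also tidier than repeating the analysis. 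The paper's route buys brevity: the maximum drops out in one global step, with the two odd families visible symmetrically from the start.
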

\begin{proof}
Let $n=2m$. Consider the sum $|\pi(1)-\pi(2)|+\cdots +|\pi(n-1)-\pi(n)|$. It
consists of $2n-2$ integers from $N$, $n-1$ with positive and $n-1$ with
negative signs. Now, if we are to maximize the sum of $2n-2$ integers out of
$1$, $1$, $\dots$, $n$, $n$ with $n-1$ integers having negative sign, we must
choose
\begin{equation}\label{Eq:Signs}
    -1-1-\cdots -(m-1)-(m-1)-m+(m+1)+(m+2)+(m+2)+\cdots +  n+n,
\end{equation}
which equals $2m^2-1$.

Is there a permutation $\pi$ such that
$|\pi(1)-\pi(2)|+\cdots+|\pi(n-1)-\pi(n)|=2m^2-1$? The fact that $m$, $m+1$
appear just once in \eqref{Eq:Signs} means that $\pi(1)=m$ and $\pi(n)=m+1$, or
vice versa. Moreover, the distribution of signs implies that $\pi$ must
oscillate between $\{1,\dots,m\}$ and $\{m+1,\dots,n\}$. Any such permutation
will do.

When $n=2m+1$, we proceed similarly. The two maximal sums analogous to
\eqref{Eq:Signs} are
\begin{displaymath}
    -1-1-\cdots -(m-1)-(m-1)-m-(m+1)+(m+2)+(m+2)+\cdots+n+n,
\end{displaymath}
and
\begin{displaymath}
    -1-1-\cdots -m-m+(m+1)+(m+2)+(m+3)+(m+3)+\cdots+n+n,
\end{displaymath}
since deleting both occurrences of $m+1$ would not correspond to any
permutation.
\end{proof}


\section{Stretching with multiplicative formula}

We answer the following question: \emph{For which permutations $\pi\in S_n$
is $\stre{*}{\mathcal B}{\pi}$ maximal, where $\mathcal B=\{\{i,i+1\};\;1\le
i<n\}$?} Note that with this choice of $\mathcal B$ we have
\begin{displaymath}
    \stre{*}{\mathcal B}{\pi} =
    \left(\prod_{i=1}^{n-1}|\pi(i)-\pi(i+1)|\right)^{1/(n-1)}.
\end{displaymath}

\subsection{Maximizing products of $n$ integers with a given sum}

We obviously have:

\begin{lemma}\label{Lm:3Reals}
Let $x\le y$ be positive integers. Then $(x-1)(y+1)<xy$.
\end{lemma}

For positive integers $n\le s$, let
\begin{displaymath}
    D_{n,s}=\{(x_1,\dots,x_n);\;
        x_i\in\mathbb Z,\, x_i>0,\, x_1+\cdots + x_n=s\},
\end{displaymath}
and
\begin{displaymath}
    M_{n,s} = \max\{x_1\cdots x_n;\;(x_1,\dots,x_n)\in D_{n,s}\}.
\end{displaymath}
The following result is certainly well known. We offer a short proof:

\begin{theorem}\label{Th:MaxP}
Let $n\le s$ be positive integers, $a=s/n$. Then
\begin{displaymath}
    M_{n,s}=\lfloor a\rfloor^m\cdot \lceil a\rceil^{n-m},
\end{displaymath}
where $m = n\lceil a\rceil-s$. Moreover, $M_{n,s}<M_{n,s+1}$.
\end{theorem}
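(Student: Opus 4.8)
The plan is to prove both the formula for $M_{n,s}$ and the strict monotonicity $M_{n,s}<M_{n,s+1}$, and the natural engine for both is Lemma \ref{Lm:3Reals}. The key structural observation is that at an optimal tuple the coordinates must be as balanced as possible: no two coordinates can differ by $2$ or more. Indeed, if some optimal $(x_1,\dots,x_n)$ had coordinates $x_i\le x_j$ with $x_j-x_i\ge 2$, then replacing the pair $(x_i,x_j)$ by $(x_i+1,x_j-1)$ keeps the sum fixed while strictly increasing the product by Lemma \ref{Lm:3Reals} (applied with $x=x_i+1\le x_j-1=y$, so $(x_i)(x_j)=(x-1)(y+1)<xy$), contradicting optimality.

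First I would establish this balancing claim, so that any maximizer has all coordinates in $\{\lfloor a\rfloor,\lceil a\rceil\}$. When $a=s/n$ is an integer all coordinates equal $a$ and the formula is immediate; otherwise $\lceil a\rceil=\lfloor a\rfloor+1$. If $k$ coordinates equal $\lceil a\rceil$ and $n-k$ equal $\lfloor a\rfloor$, the sum condition reads $k\lceil a\rceil+(n-k)\lfloor a\rfloor=s$, i.e.\ $n\lfloor a\rfloor+k=s$, forcing $k=s-n\lfloor a\rfloor$ and hence $n-k=n\lceil a\rceil-s=m$. This yields exactly $M_{n,s}=\lfloor a\rfloor^m\lceil a\rceil^{n-m}$, and since the multiset of coordinate values is thereby uniquely determined, the maximizer is unique up to permutation of the entries.

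For the monotonicity $M_{n,s}<M_{n,s+1}$, I would argue directly from a maximizer for $s$. Take an optimal tuple $(x_1,\dots,x_n)\in D_{n,s}$; increasing any single coordinate $x_i$ to $x_i+1$ produces a tuple in $D_{n,s+1}$ whose product is $\frac{x_i+1}{x_i}$ times the old product, strictly larger since $x_i\ge 1$. Because $M_{n,s+1}$ is the maximum over all of $D_{n,s+1}$, we get $M_{n,s+1}\ge \frac{x_i+1}{x_i}M_{n,s}>M_{n,s}$, as required.

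The only point demanding care is the balancing step, and in particular confirming that the improving swap stays inside $D_{n,s}$ (it does: both new entries remain positive integers because $x_i+1\ge 2$ and $x_j-1\ge x_i+1\ge 2>0$, and the sum is preserved). I expect this to be the main obstacle only in the bookkeeping sense --- once Lemma \ref{Lm:3Reals} is invoked the inequality is immediate, so the real work is simply organizing the counting argument that pins down $m$ and the uniqueness of the value multiset. No subtlety arises in the monotonicity half.
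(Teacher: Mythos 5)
Your proposal is correct, and it rests on the same engine as the paper (Lemma \ref{Lm:3Reals}) and the same monotonicity step; but the way you establish the main formula is organized differently, and the difference is worth noting. The paper argues in the opposite direction: it first writes down the balanced tuple $\overrightarrow{x}$ with $x_1\le\cdots\le x_n$, $x_n-x_1\le 1$, and then shows that \emph{every} other sorted tuple $\overrightarrow{y}\in D_{n,s}$ has strictly smaller product, by analyzing the differences $d_i=y_i-x_i$, proving they change sign only once, and walking from $\overrightarrow{x}$ to $\overrightarrow{y}$ through unit transfers each of which decreases the product by Lemma \ref{Lm:3Reals}. You instead take an arbitrary maximizer (which exists because $D_{n,s}$ is finite) and show by a single improving swap $(x_i,x_j)\mapsto(x_i+1,x_j-1)$ that it cannot have two coordinates differing by $2$ or more; the counting argument then pins down the multiset $\{\lfloor a\rfloor^m,\lceil a\rceil^{n-m}\}$. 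Your route is shorter and avoids the sign-separation bookkeeping entirely, at the cost of invoking the existence of a maximizer (harmless here) and of arguing by contradiction rather than by direct domination; both routes deliver not just the value of $M_{n,s}$ but also the fact that the maximizing multiset of coordinates is unique, which is what the paper actually uses later in Theorems \ref{Th:StreMEven} and \ref{Th:StreMOdd}. The treatment of $M_{n,s}<M_{n,s+1}$ is identical to the paper's.
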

\begin{proof}
Let $\overrightarrow{x}=(x_1,\dots,x_n)$ be the unique point in $D$ such that
$x_1\le \cdots \le x_n$ and $x_n-x_1\le 1$. It is easy to see that
$x_1=\cdots = x_m=\lfloor a\rfloor$, $x_{m+1}=\cdots =x_n=\lceil a\rceil$,
where $m=n\lceil a\rceil -s$.

Let $\overrightarrow{y}=(y_1,\dots,y_n)\in D$ be such that $y_i\le y_{i+1}$
and $\overrightarrow{y}\ne \overrightarrow{x}$. Let $d_i=y_i-x_i$ and note
that $d_1<0$, $d_n>0$, $d_1+\cdots + d_n=0$. Assume for a while that $d_i>0$
and $d_j<0$ for some $i<j$. Then $x_i<y_i\le y_j<x_j$ shows that $x_i$, $x_j$
differ by more than $1$, which is impossible. Hence there is $k$ such that
$d_i\le 0$ for every $i\le k$, and $d_i\ge 0$ for every $i>k$.

The integers $d_i$ count how many times do we have to add or subtract $1$ to
obtain $y_i$ from $x_i$. Since $d_1+\cdots + d_n=0$, we can reach
$\overrightarrow{y}$ from $\overrightarrow{x}$ by repeatedly decreasing one
coordinate by $1$ and increasing other coordinate by $1$ at the same time.
Moreover, we have just shown that we can do this in such a way that only the
first $k$ coordinates will possibly decrease, and only the remaining $n-k$
coordinates will possibly increase. Since $x_k\le x_{k+1}$, Lemma
\ref{Lm:3Reals} implies that the product will diminish with every step.

It remains to show that $M_{n,s}<M_{n,s+1}$. When $(x_1$, $\dots$, $x_n)\in
D_{n,s}$ then $(x_1+1$, $x_2$, $\dots$, $x_n)\in D_{n,s+1}$, and, clearly,
$x_1\cdots x_n<(x_1+1)x_2\cdots x_n$.
\end{proof}

\subsection{The even case}

Let $n=2m$. Theorem \ref{Th:StreA} shows that $(n-1)\stre{+}{\mathcal
B}{\pi}\le 2m^2-1$, and that the equality holds if and only if $\pi$
oscillates between $\{1,\dots,m\}$, $\{m+1,\dots,n\}$ and
$(\pi(1),\pi(n))\in\{(m,m+1)$, $(m+1,m)\}$. By Theorem \ref{Th:MaxP}, the
product of $2m-1$ positive integers with sum $2m^2-1$ is maximized by $m\cdot
m + (m-1)(m+1)$.

\begin{lemma}\label{Lm:EvenP1}
Let $n=2m$. Let $\pi\in S_n$ be a permutation oscillating between
$\{1,\dots,m\}$, $\{m+1,\dots,n\}$ such that $\pi(1)=m$, $\pi(n)=m+1$ and such
that $|\pi(i)-\pi(i+1)|\in\{m,m+1\}$ for every $1\le i<n$. Then $\pi$ is
uniquely determined, namely: $\pi(2i)=n-i+1$, $\pi(2i-1)=m-i+1$.
\end{lemma}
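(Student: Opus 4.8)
The plan is to determine $\pi$ one value at a time, from the left, showing that at each step the two standing hypotheses (oscillation, and $|\pi(i)-\pi(i+1)|\in\{m,m+1\}$) leave exactly one admissible choice. First I would record the parity structure. Since $\pi(1)=m$ lies in the low block $L=\{1,\dots,m\}$ and $\pi$ oscillates between $L$ and the high block $H=\{m+1,\dots,n\}$, the odd positions $1,3,\dots,n-1$ are sent into $L$ and the even positions $2,4,\dots,n$ into $H$. Writing $a_i=\pi(2i-1)\in L$ and $b_i=\pi(2i)\in H$ for $1\le i\le m$, the elementary fact I would use repeatedly is that whenever $a\in L$ and $b\in H$ satisfy $|a-b|\in\{m,m+1\}$ then $b>a$, so $b=a+m$ or $b=a+m+1$, equivalently $a=b-m$ or $a=b-m-1$.

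Next I would run the forcing. For the base case, $a_1=m$ gives $b_1\in\{2m,\,2m+1\}$, but $2m+1>n$, so $b_1=n$ is forced. For the inductive step, suppose $a_j=m-j+1$ and $b_j=n-j+1$ have been established for all $j\le i$. The transition across positions $2i,2i+1$ gives $a_{i+1}\in\{b_i-m,\,b_i-m-1\}=\{m-i+1,\,m-i\}$; since $m-i+1=a_i$ is already used, $a_{i+1}=m-i$ is forced. The transition across positions $2i+1,2i+2$ then gives $b_{i+1}\in\{a_{i+1}+m,\,a_{i+1}+m+1\}=\{n-i,\,n-i+1\}$; since $n-i+1=b_i$ is already used, $b_{i+1}=n-i$ is forced. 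These are precisely the claimed formulas $\pi(2i-1)=m-i+1$ and $\pi(2i)=n-i+1$.

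Finally I would check that the recursion terminates consistently: at $i=m$ it yields $a_m=1$ and $b_m=n-m+1=m+1$, so $\pi(n)=m+1$ emerges automatically, in agreement with the hypothesis, and the lists $a_1,\dots,a_m$ and $b_1,\dots,b_m$ exhaust $L$ and $H$ respectively, confirming that the object produced is a genuine permutation. I expect no substantive obstacle here; the only point demanding care is that the exclusion of the wrong alternative comes in two slightly different flavors—out of range at the very first step ($2m+1>n$) versus repetition of an already-assigned value at every later step—so the base case should be separated cleanly from the inductive pattern. It is worth noting that the argument is purely a range-and-injectivity count and does not even invoke the convexity estimate of Lemma~\ref{Lm:3Reals}.
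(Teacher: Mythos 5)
Your proof is correct and follows essentially the same route as the paper's: a left-to-right forcing argument where $\pi(2)=2m$ is forced because $2m+1$ is out of range, and each subsequent value is forced because the alternative has already been used (the paper compresses this into ``$\pi(2)=2m$, then $\pi(3)=m-1$, etc.''). Your version simply makes the induction and the terminal consistency check explicit, which is a faithful elaboration rather than a different method.
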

\begin{proof}
We must have $\pi(2)=2m$. Then $\pi(3)=m-1$ since $\pi(1)=m$, etc.
\end{proof}

Dually:

\begin{lemma}\label{Lm:EvenP2}
Let $n=2m$. Let $\pi\in S_n$ be a permutation oscillating between
$\{1,\dots,m\}$, $\{m+1,\dots,n\}$ such that $\pi(1)=m+1$, $\pi(n)=m$ and such
that $|\pi(i)-\pi(i+1)|\in\{m,m+1\}$ for every $1\le i<n$. Then $\pi$ is
uniquely determined, namely: $\pi(2i)=i$, $\pi(2i-1)=m+i$.
\end{lemma}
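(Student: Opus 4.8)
The plan is to mirror the terse argument of Lemma~\ref{Lm:EvenP1} and unwind $\pi$ deterministically from its first entry. I would prove by induction on $j$ that $\pi(2j-1)=m+j$ and $\pi(2j)=j$ for every $1\le j\le m$. For the base case $j=1$, the hypothesis gives $\pi(1)=m+1$; since $\pi(1)\in\{m+1,\dots,n\}$ and $\pi$ oscillates, $\pi(2)$ lies in $\{1,\dots,m\}$, and then $(m+1)-\pi(2)=|\pi(1)-\pi(2)|\in\{m,m+1\}$ forces $\pi(2)=1$.

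The engine of the inductive step is that oscillation, the gap condition, and injectivity together leave exactly one admissible value at each position. Suppose $\pi(2j)=j$ with $j<m$. Oscillation puts $\pi(2j+1)$ in $\{m+1,\dots,n\}$, and the gap condition gives $\pi(2j+1)-j=|\pi(2j)-\pi(2j+1)|\in\{m,m+1\}$, so $\pi(2j+1)\in\{m+j,m+j+1\}$; since $m+j=\pi(2j-1)$ is already used, $\pi(2j+1)=m+j+1$. Applying the same reasoning to $\pi(2j+1)=m+j+1$, oscillation puts $\pi(2j+2)$ in $\{1,\dots,m\}$ and the gap condition yields $\pi(2j+2)\in\{j,j+1\}$; as $j=\pi(2j)$ is taken, $\pi(2j+2)=j+1$. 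This advances the induction.

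It then remains to confirm consistency with the remaining hypotheses. Setting $j=m$ yields $\pi(2m-1)=n$ and $\pi(2m)=m$, so the prescribed value $\pi(n)=m$ is recovered automatically and the unwinding terminates correctly at position $n$. I would also remark that the resulting sequence $m+1,1,m+2,2,\dots,n,m$ genuinely oscillates between the two halves and has every consecutive gap in $\{m,m+1\}$, so the formula does describe a bona fide permutation satisfying all the hypotheses; uniqueness is exactly the content of the induction.

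I do not anticipate a genuine obstacle: the whole difficulty is bookkeeping, and the only point requiring care is the bracketing at each transition---checking that passing from the lower half to the upper shifts the two candidates by $m$ or $m+1$ (and symmetrically on the return), and that precisely one of the two candidates has already been consumed. Everything else is the same deterministic propagation used, without comment, in Lemma~\ref{Lm:EvenP1}.
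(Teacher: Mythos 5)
Your proof is correct and is essentially the paper's own argument: the paper proves Lemma~\ref{Lm:EvenP1} by exactly this deterministic unwinding (``We must have $\pi(2)=2m$. Then $\pi(3)=m-1$ since $\pi(1)=m$, etc.'') and disposes of Lemma~\ref{Lm:EvenP2} with the single word ``Dually.'' You have merely written out that same propagation as a formal induction, with the (correct) observations that injectivity eliminates one of the two candidate values at each step and that the hypothesis $\pi(n)=m$ is recovered rather than needed.
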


\begin{theorem}\label{Th:StreMEven}
Let $n=2m$. Then the maximum of $\mathrm{s}^*_\mathcal B$ over all permutations
of $S_n$ is $(m^m(m+1)^{m-1})^{1/(2m-1)}$, and it is attained precisely by the
two permutations of Lemmas $\ref{Lm:EvenP1}$, $\ref{Lm:EvenP2}$.
\end{theorem}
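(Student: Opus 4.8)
The plan is to convert the maximization of $\stre{*}{\mathcal B}{\pi}$ into the two optimization results already in hand. Since $\stre{*}{\mathcal B}{\pi}=(\prod_{i=1}^{n-1}|\pi(i)-\pi(i+1)|)^{1/(n-1)}$ and the exponent is constant, it suffices to maximize the product of the $n-1=2m-1$ positive integers $d_i=|\pi(i)-\pi(i+1)|$. Their sum $s=d_1+\cdots+d_{n-1}$ equals $(n-1)\stre{+}{\mathcal B}{\pi}$, so by Theorem \ref{Th:StreA} we have $s\le 2m^2-1$. The first key step is to note that, by the very definition of $M_{n,s}$, the product satisfies $d_1\cdots d_{n-1}\le M_{2m-1,\,s}$. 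The second key step is the monotonicity clause $M_{n,s}<M_{n,s+1}$ of Theorem \ref{Th:MaxP}, which lets me replace $s$ by its maximal admissible value. Chaining the two bounds gives
\begin{displaymath}
\prod_{i=1}^{n-1}|\pi(i)-\pi(i+1)|\;\le\;M_{2m-1,\,s}\;\le\;M_{2m-1,\,2m^2-1}.
\end{displaymath}

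Next I would evaluate $M_{2m-1,\,2m^2-1}$ from Theorem \ref{Th:MaxP}. Writing $a=(2m^2-1)/(2m-1)=m+(m-1)/(2m-1)$, the fractional part lies strictly between $0$ and $1$ for $m\ge 2$, so $\lfloor a\rfloor=m$ and $\lceil a\rceil=m+1$ (the case $m=1$ is degenerate but gives the same final number). The count of coordinates equal to $\lfloor a\rfloor$ is $(2m-1)(m+1)-(2m^2-1)=m$, leaving $m-1$ coordinates equal to $m+1$. Hence $M_{2m-1,\,2m^2-1}=m^m(m+1)^{m-1}$, and taking $(2m-1)$th roots yields the claimed upper bound $(m^m(m+1)^{m-1})^{1/(2m-1)}$ for $\stre{*}{\mathcal B}{\pi}$.

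The equality analysis then traces when both inequalities are simultaneously tight. Tightness in the monotonicity inequality forces $s=2m^2-1$; by the equality clause of Theorem \ref{Th:StreA} this means $\pi$ oscillates between $\{1,\dots,m\}$ and $\{m+1,\dots,n\}$ with $(\pi(1),\pi(n))\in\{(m,m+1),(m+1,m)\}$. Tightness in the first inequality, now that the sum is pinned at $2m^2-1$, forces the multiset $\{d_1,\dots,d_{n-1}\}$ to be the unique maximizer identified in the proof of Theorem \ref{Th:MaxP}, namely $m$ copies of $m$ together with $m-1$ copies of $m+1$; in particular every $|\pi(i)-\pi(i+1)|\in\{m,m+1\}$. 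These are exactly the hypotheses of Lemmas \ref{Lm:EvenP1} and \ref{Lm:EvenP2}, which pin down one permutation for each of the two boundary choices, so there are precisely two maximizers. That these two attain the bound is then automatic: they oscillate with sum $2m^2-1$ and all differences in $\{m,m+1\}$, so their difference multiset is forced to be $m$ copies of $m$ and $m-1$ copies of $m+1$, giving product exactly $m^m(m+1)^{m-1}$.

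The main obstacle to guard against is the temptation to optimize the sum and the balance of the $d_i$ separately: a priori a permutation with a strictly smaller sum of differences might produce a larger product by being better balanced. The monotonicity statement $M_{n,s}<M_{n,s+1}$ is precisely the ingredient that rules this out, licensing the sequential reasoning---first drive the sum to its maximum via Theorem \ref{Th:StreA}, then balance the differences via Theorem \ref{Th:MaxP}---and it is the one hypothesis without which the chain of inequalities would not close.
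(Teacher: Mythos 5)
Your proof is correct and takes essentially the same route as the paper: bound the sum $s=\sum_i|\pi(i)-\pi(i+1)|$ by Theorem \ref{Th:StreA}, use Theorem \ref{Th:MaxP} (including its monotonicity clause $M_{n,s}<M_{n,s+1}$) to bound the product, and combine the equality case of Theorem \ref{Th:StreA} with the uniqueness of the balanced multiset to reduce the maximizers to the two permutations of Lemmas \ref{Lm:EvenP1} and \ref{Lm:EvenP2}. The only cosmetic differences are that you chain the two bounds through monotonicity where the paper splits into the cases $s<2m^2-1$ and $s=2m^2-1$, and that you write out the evaluation $M_{2m-1,2m^2-1}=m^m(m+1)^{m-1}$, which the paper records just before its proof.
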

\begin{proof} Let $\pi\in S_n$. Let $x_i=|\pi(i)-\pi(i+1)|$, $s=x_1+\cdots+
x_{2m-1}$. Then $s\le 2m^2-1$ by Theorem \ref{Th:StreA}. If $s<2m^2-1$ then
$\stre{*}{\mathcal B}{\pi}^{n-1}\le M_{n,2m^2-2} < M_{n,2m^2-1}$ by Theorem
\ref{Th:MaxP}. If $s=2m^2-1$, we have $\stre{*}{\mathcal B}{\pi}^{n-1}\le
M_{n,s}=m^m\cdot (m+1)^{m-1}$, and the equality holds only for the two
permutations of Lemma \ref{Lm:EvenP1}, \ref{Lm:EvenP2}.
\end{proof}


\subsection{Local improvements}

When $n=2m+1$, we are going to see that the maximum of
$(\mathrm{s}^*_{\mathcal B})^{n-1}$ is $M=m^m(m+1)(m+2)^{m-1}$, which is far
less than $M_{2m,2m^2+2m-1}$ (cf. Theorems \ref{Th:StreA} and \ref{Th:MaxP}).
In fact, it can happen that $M<M_{2m,s}$ even if $s<2m^2+2m-1$. A more
detailed understanding of permutations $\pi$ with maximal
$\mathrm{s}^*_{\mathcal B}(\pi)$ is therefore needed.

There is a one-to-one correspondence between the permutations of $S_n$ and the
$n$-cycles of $S_n$ with designated beginning. To see this, identity $\pi\in
S_n$ with the $n$-cycle $\rho$ defined by $\rho(\pi(i))=\pi(i+1)$ if $i<n$,
$\rho(\pi(n))=\pi(1)$, and designate $\pi(1)$ as the beginning of $\rho$.
Therefore, finding the maximum of $\mathrm{s}^*_\mathcal B$ on $S_n$ is
equivalent to finding the maximum of $\mathrm{s}^*$ over all $n$-cycles $\rho$
in $S_n$, where
\begin{displaymath}
    \sstr{\rho}=\max\left\{\prod_{i\ne j}|i-\rho(i)|;\;1\le j\le n\right\}.
\end{displaymath}

In this subsection we show that a number of conditions on $\rho$ must hold,
should $\sstr{\rho}$ be maximal.

The following terminology will allow us to communicate more efficiently. We say
that two jumps $a\mapsto\rho(a)$, $b\mapsto\rho(b)$ of a cycle $\rho$
\emph{have distinct endpoints} if $|\{a,\rho(a),b,\rho(b)\}|=4$. The two jumps
are \emph{disjoint} if the intervals $[a,\rho(a)]$, $[b,\rho(b)]$ do not
intersect. The jump $a\mapsto \rho(a)$ \emph{skips over} the jump
$b\mapsto\rho(b)$ if $[b,\rho(b)]\subseteq [a,\rho(a)]$. (Note that a jump
skips over itself.) The jump $a\mapsto \rho(a)$ \emph{bridges}
$b\mapsto\rho(b)$ if it skips over it and the two jumps have distinct
endpoints. Two jumps \emph{intersect nontrivially} if they are not disjoint,
one does not skip over the other, and they have distinct endpoints. A jump
$a\mapsto\rho(a)$ is \emph{short} if $|a-\rho(a)|\le|b-\rho(b)|$ for all $b$.
All other jumps are called \emph{long}. Finally, the jumps have the \emph{same
direction} if $(a-\rho(a))(b-\rho(b))>0$, otherwise they have \emph{opposite
direction}.

Given a cycle $\rho$ and two jumps $i\mapsto\rho(i)$, $j\mapsto\rho(j)$ with
distinct endpoints, let $\rho_{i,j}$ denote the cycle depicted in Figure
\ref{Fg:Cross}.

\setlength{\unitlength}{1mm}
\begin{figure}[ht]\begin{center}\input{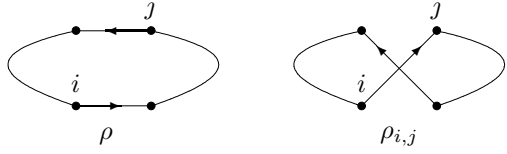}\end{center}
\caption{The cycles $\rho$ and $\rho_{i,j}$.}\label{Fg:Cross}
\end{figure}

\begin{lemma}\label{Lm:Aux1}
Let $\rho\in S_n$ be an $n$-cycle. Let $i\mapsto\rho(i)$, $j\mapsto\rho(j)$ be
jumps with distinct endpoints such that $i\mapsto\rho(i)$ is a short jump and
$|i-j|>|j-\rho(j)|$. Then $\sstr{\rho_{i,j}}>\sstr{\rho}$.
\end{lemma}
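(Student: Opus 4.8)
The plan is to compute $\sstr{\rho}$ exactly and to bound $\sstr{\rho_{i,j}}$ from below in terms of the jump lengths the two cycles share, and then to compare. First I would pin down the combinatorial effect of the construction in Figure \ref{Fg:Cross}: passing from $\rho$ to $\rho_{i,j}$ deletes the two edges $i\mapsto\rho(i)$ and $j\mapsto\rho(j)$ and inserts the two crossing edges $i\mapsto j$ and $\rho(i)\mapsto\rho(j)$, reversing one of the two arcs of $\rho$ in the process. Reversing an arc changes the direction but not the length of each jump on it, and one checks that the reconnection again produces a single $n$-cycle; here the distinct-endpoints hypothesis guarantees that $i$, $\rho(i)$, $j$, $\rho(j)$ really are four distinct points, so that $\rho_{i,j}$ is well defined and the four displayed jumps are genuine. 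Writing $O=\prod_{k\notin\{i,j\}}|k-\rho(k)|$ for the product of the lengths of the jumps common to both cycles, we obtain $\prod_{k}|k-\rho_{i,j}(k)|=O\cdot|i-j|\cdot|\rho(i)-\rho(j)|$, while $\prod_{k}|k-\rho(k)|=O\cdot|i-\rho(i)|\cdot|j-\rho(j)|$.

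Next I would unwind the definition $\sstr{\rho}=\max_{j'}\prod_{k\ne j'}|k-\rho(k)|$. Deleting the factor $|j'-\rho(j')|$ makes the product largest when the deleted jump is as short as possible, so the maximum is realized by deleting a shortest jump. Since $i\mapsto\rho(i)$ is short by hypothesis, $|i-\rho(i)|$ is a minimal jump length and deleting it is optimal, giving $\sstr{\rho}=\prod_{k\ne i}|k-\rho(k)|=O\cdot|j-\rho(j)|$.

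For $\sstr{\rho_{i,j}}$ I would avoid locating its shortest jump and instead use only that this quantity, being a maximum over the choice of deleted jump, dominates the value obtained from any single choice. Deleting the jump $\rho(i)\mapsto\rho(j)$ from $\rho_{i,j}$ leaves precisely the product $O\cdot|i-j|$, so $\sstr{\rho_{i,j}}\ge O\cdot|i-j|$. Combining this with the previous paragraph yields $\sstr{\rho_{i,j}}\ge O\cdot|i-j|>O\cdot|j-\rho(j)|=\sstr{\rho}$, where the strict inequality is exactly the hypothesis $|i-j|>|j-\rho(j)|$ and $O>0$ because an $n$-cycle has no fixed points.

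The computation itself is a one-line comparison, so the only step demanding genuine care is the bookkeeping of the first paragraph: confirming that the crossing move of Figure \ref{Fg:Cross} is a single $n$-cycle and that it exchanges exactly the jump lengths claimed, so that the common factor $O$ is truly shared. It is worth noting how economically the hypotheses enter: shortness of $i\mapsto\rho(i)$ is used only to evaluate $\sstr{\rho}$, the inequality $|i-j|>|j-\rho(j)|$ is used only in the final comparison, and the length $|\rho(i)-\rho(j)|$ of the second new jump never needs to be estimated at all.
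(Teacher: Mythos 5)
Your proof is correct and follows essentially the same route as the paper's: use shortness of $i\mapsto\rho(i)$ to evaluate $\sstr{\rho}=\prod_{k\ne i}|k-\rho(k)|$, lower-bound $\sstr{\rho_{i,j}}$ by the product obtained by deleting the new jump $\rho(i)\mapsto\rho(j)$, and conclude from $|i-j|>|j-\rho(j)|$. The paper's two-line proof leaves the bookkeeping about which jump lengths $\rho$ and $\rho_{i,j}$ share implicit; you spell it out, but the argument is the same.
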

\begin{proof}
Since $i\mapsto\rho(i)$ is short, $\sstr{\rho}=\prod_{k\ne i}|k-\rho(k)|$. Now,
$\sstr{\rho_{i,j}}\ge |i-j|\prod_{k\ne i,\,k\ne j}|k-\rho(k)| > \prod_{k\ne
i}|k-\rho(k)|$.
\end{proof}

\begin{lemma}\label{Lm:LI}
Let $\rho\in S_n$ be an $n$-cycle such that one of the following conditions
holds:
\begin{enumerate}
\item[(i)] there are disjoint jumps in the same direction,

\item[(ii)] a short jump nontrivially intersects a jump in opposite
direction,

\item[(iii)] a short jump is disjoint from a jump in opposite direction,

\item[(iv)] there are disjoint jumps in opposite direction $($generalizing
\emph{(iii)}$)$,

\item[(v)] a jump bridges a long jump in opposite direction.
\end{enumerate}
Then there is an $n$-cycle $\sigma\in S_n$ such that
$\sstr{\sigma}>\sstr{\rho}$.
\end{lemma}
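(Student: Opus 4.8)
The engine for all five cases is the crossing operation $\rho_{i,j}$ of Figure~\ref{Fg:Cross}: it removes the two jumps $i\mapsto\rho(i)$ and $j\mapsto\rho(j)$, inserts the jumps $i\mapsto j$ and $\rho(i)\mapsto\rho(j)$ of lengths $|i-j|$ and $|\rho(i)-\rho(j)|$, and reverses an arc of the cycle, so that every remaining jump keeps its length. I plan to measure the effect of this move in two complementary ways. First, exactly as in Lemma~\ref{Lm:Aux1}: since $\sstr{\rho}$ is obtained by discarding a shortest jump, I may instead discard one of the two new jumps of $\rho_{i,j}$ and retain the other, giving $\sstr{\rho_{i,j}}\ge\max\{|i-j|,\,|\rho(i)-\rho(j)|\}\cdot\prod_{k\ne i,j}|k-\rho(k)|$. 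Hence, whenever $i\mapsto\rho(i)$ is a \emph{short} jump we have $\sstr{\rho}=|j-\rho(j)|\cdot\prod_{k\ne i,j}|k-\rho(k)|$, and the move strictly increases the stretch as soon as $\max\{|i-j|,|\rho(i)-\rho(j)|\}>|j-\rho(j)|$. Second, when no short jump is available: if $\rho_{i,j}$ strictly increases the total product $P=\prod_k|k-\rho(k)|$ and both new jumps are no shorter than a globally shortest jump of $\rho$, then that shortest jump survives in $\rho_{i,j}$ and the stretch grows by the factor $P(\rho_{i,j})/P(\rho)>1$.

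Placing the four points $i,\rho(i),j,\rho(j)$ on the line according to the hypothesis then settles most cases mechanically. In (ii) and (iii) the short jump supplies the first principle: writing out the two admissible orderings of the four points one checks in each that $|i-j|$ or $|\rho(i)-\rho(j)|$ exceeds the partner length $|j-\rho(j)|$, so $\rho_{i,j}$ wins. In (i) and (v) I use the second principle. A direct expansion shows that for two disjoint jumps in the same direction the new product $|i-j|\cdot|\rho(i)-\rho(j)|$ exceeds the old product $|i-\rho(i)|\cdot|j-\rho(j)|$, the difference factoring as a product of two positive gaps, and the same factorization holds when a jump bridges an opposite jump. In (v) the bridged jump is \emph{long}, so neither it nor the even longer bridging jump is shortest; thus the shortest jump is untouched and both new lengths exceed it, and the stretch increases. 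Case (i) needs one extra check when the move consumes a shortest jump, but there retaining the longer new jump already beats $\sstr{\rho}$.

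The delicate case is (iv), disjoint jumps in opposite direction, for which $\rho_{i,j}$ need \emph{not} raise the product: it produces one jump spanning all four points and one very short jump, and if $\rho$ carries an even shorter jump elsewhere the gain on the spanning jump can be swamped. This is the main obstacle. I resolve it by reducing to the three cases already settled, using a globally shortest jump $S$, say increasing. If some decreasing jump is disjoint from $S$, then $S$ and it form a short, opposite, disjoint pair and (iii) applies. Otherwise every decreasing jump meets $S$; since $S$ is shortest it cannot strictly contain another jump, so each decreasing jump either intersects $S$ nontrivially, whence (ii) applies to the short jump $S$, or bridges $S$. In the remaining situation take the given disjoint opposite pair $A$ (increasing), $B$ (decreasing): as $B$ bridges $S$, the interval of $S$ lies inside that of $B$, and disjointness of $A$ and $B$ forces $A$ to be disjoint from $S$, so $A$ and $S$ are disjoint jumps in the same direction and (i) applies. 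Thus (iv) always falls back onto (i), (ii) or (iii), completing the lemma; the only points needing care throughout are the degenerate orderings where two endpoints coincide, which the standing hypothesis of distinct endpoints excludes.
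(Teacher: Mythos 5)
Your proof is correct and follows essentially the same route as the paper's: the same crossing move $\rho_{i,j}$, a Lemma~\ref{Lm:Aux1}-type bound for cases (ii) and (iii), a product-comparison argument for (i) and (v), and a reduction of (iv) to the already-settled cases by means of a shortest jump. One caveat: your closing claim that coinciding endpoints are excluded by a ``standing hypothesis'' is not accurate inside the reduction of (iv), since a decreasing jump can share an endpoint with $S$ (for instance a cycle-neighbour of $S$, which then neither bridges $S$ nor intersects it nontrivially); this is harmless, however, because two opposite-direction jumps sharing an endpoint have nested intervals, and minimality of $S$ rules out the nesting $[B]\subseteq[S]$, so the containment $[S]\subseteq[B]$ that your final step uses still holds.
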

\begin{proof}
In case (i), write $a<\rho(a)<b<\rho(b)$ without loss of generality, and let
$\sigma=\rho_{a,b}$. Note that the two old jumps $a\mapsto\rho(a)$,
$b\mapsto\rho(b)$ have been replaced by two longer jumps $a\mapsto b$,
$\rho(a)\mapsto \rho(b)$, respectively.

In case (ii), let $a\mapsto\rho(a)$ be a short jump, and let $b$ be such that
$a<\rho(b)<\rho(a)<b$. Let $\sigma=\rho_{a,b}$ and note that the new jump
$a\mapsto b$ is longer that the old jump $b\mapsto\rho(b)$. We are done by
Lemma \ref{Lm:Aux1}.

In case (iii), let $a\mapsto\rho(a)$ be a short jump and $a<\rho(a)<\rho(b)<b$.
Let $\sigma=\rho_{a,b}$. The new jump $a\mapsto b$ is then longer than the old
jump $b\mapsto\rho(b)$, and we are again done by Lemma \ref{Lm:Aux1}.

In case (iv), we can assume that none of the two jumps $a\mapsto \rho(a)$,
$b\mapsto\rho(b)$ in question is short, else (iii) applies. Let
$c\mapsto\rho(c)$ be a short jump.  We can assume that $c\mapsto\rho(c)$ is not
disjoint from $a\mapsto\rho(a)$ nor $b\mapsto\rho(b)$, otherwise either (i) or
(iii) applies.  Without loss of generality, assume
$\max\{a,\rho(a)\}<\max\{b,\rho(b)\}$. Since the two jumps are in opposite
directions, $c\mapsto\rho(c)$ cannot intersect both jumps trivially.  Again
without loss of generality, assume $c\mapsto\rho(c)$ intersects
$a\mapsto\rho(a)$ nontrivially. If $a\mapsto \rho(a)$, $c\mapsto\rho(c)$ are in
opposite direction, then (ii) applies. So suppose that they are in the same
direction. Then $c\mapsto\rho(c)$ and $b\mapsto\rho(b)$ are in opposite
direction, and we can assume that they intersect trivially, else (ii) applies.
But that is impossible.

In case (v), let $\rho(b)<a<\rho(a)<b$ and $\sigma=\rho_{a,b}$. Let $x$, $y$,
$z$ be the lengths $a-\rho(b)$, $\rho(a)-a$ and $b-\rho(a)$, respectively. Then
we have lost the factor $(x+y+z)y = xy+y^2+yz$ and gained the factor
$(x+y)(y+z)=xy+xz+y^2+yz$ while comparing $\sstr{\rho}$ to $\sstr{\sigma}$.
Hence $\sstr{\sigma}>\sstr{\rho}$.
\end{proof}

\subsection{Short jumps}

We say that a jump $a\mapsto\rho(a)$ is \emph{right} if $a<\rho(a)$, else it is
\emph{left}.

\begin{proposition}\label{Pr:Short}
Let $\rho\in S_n$ be an $n$-cycle with maximal $\sstr{\rho}$. Assume that
$\rho$ has a short jump $c\mapsto c+t$, $t>0$. Then one of the following
scenarios holds:
\begin{enumerate}
\item[(i)] $t=1$, all jumps skip over $c\mapsto c+1$, $n=2m$, $c=m$, there
are $m$ left and $m$ right jumps in $\rho$,

\item[(ii)] $t=1$, the only jump not skipping $c\mapsto c+1$ is the right
jump following it, $n=2m+1$, $c=m$, there are $m+1$ right and $m$ left jumps
in $\rho$,

\item[(iii)] $t=1$, the only jump not skipping $c\mapsto c+1$ is the right
jump preceding it, $n=2m+1$, $c=m+1$, there are $m+1$ right and $m$ left
jumps in $\rho$,

\item[(iv)] $t=2$, precisely two jumps do not skip over $c\mapsto c+2$ and
these jumps are right, $n=2m+1$, $c=m$, there are $m+1$ right and $m$ left
jumps in $\rho$.
\end{enumerate}
\end{proposition}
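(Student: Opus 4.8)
The plan is to exploit maximality as an absence of moves: since $\sstr{\rho}$ is maximal, no configuration (i)--(v) of Lemma \ref{Lm:LI} can occur and no replacement $\rho_{c,j}$ of Lemma \ref{Lm:Aux1} can increase the product. My first move is to extract a common point. Because (i) and (iv) of Lemma \ref{Lm:LI} both fail, no two jumps of $\rho$ are disjoint, so the jump intervals pairwise intersect; by the one-dimensional Helly property they share a common part $[L,R]=\bigl[\max_a\min\{a,\rho(a)\},\ \min_a\max\{a,\rho(a)\}\bigr]$. If an integer $k$ satisfied $L<k<R$, the jump out of $k$ could be neither right (which forces source $\le L$) nor left (which forces source $\ge R$); hence $R-L\le 1$, and the common part is either a single integer $p=L=R$ or a single unit segment $[L,L+1]$. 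As $s\colon c\mapsto c+t$ is a shortest jump, $[L,R]\subseteq[c,c+t]$, and $s$ skips only itself.

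Next I count directions at the half-integer cuts, using that for the bijection $\rho$ the number of right jumps crossing any cut equals the number of left jumps crossing it. In the segment case every jump crosses $L+\tfrac12$, so right and left jumps are equinumerous, forcing $n=2m$; counting sources below and above the cut then pins $L=m$, and $[m,m+1]\subseteq[c,c+t]$ together with minimality of $t$ will give $t=1$, $c=m$ (scenario (i), all jumps skip $s$). In the point case every jump contains $p$, and only the two cyclic neighbours $p\mapsto\rho(p)$ and $\rho^{-1}(p)\mapsto p$ meet $p$ at an endpoint. For $\{p\}$ to be the exact intersection one needs a jump with low end $p$ and a jump with high end $p$, which these two boundary jumps can supply only if they point the same way; thus the mixed possibilities are automatically segment cases and drop out, while ``both right'' gives $n=2m+1$ with $m+1$ right and $m$ left jumps. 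A source count then forces $p=m+1$.

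The third step identifies the non-skipping jumps. Since (ii) of Lemma \ref{Lm:LI} fails, a short right jump cannot nontrivially meet a left jump, and combined with the common-point property and the fact that $s$ skips only itself this makes every left jump skip over $s$; hence the only jumps that can fail to contain $[c,c+t]$ are right jumps. To bound their number and $t$, I feed in Lemma \ref{Lm:Aux1} and its mirror coming from $\sstr{\rho}=\sstr{\rho^{-1}}$, which give $|c-a|\le|a-\rho(a)|$ and $|(c+t)-\rho(a)|\le|a-\rho(a)|$ for every jump $a\mapsto\rho(a)$ with endpoints distinct from those of $s$. These distance bounds confine the exceptional right jumps to the immediate vicinity of $s$, and condition (v) (no jump bridges a long opposite jump) removes the last freedom. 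The upshot is that $s$ is either the jump into $p$ (giving $c=m$, exceptional jump the right successor, scenario (ii)), the jump out of $p$ (giving $c=m+1$, exceptional jump the right predecessor, scenario (iii)), or a length-$2$ jump through $p$ (giving $c=m$, with both boundary right jumps exceptional, scenario (iv)).

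I expect the arithmetic census of the point case to be the crux; the existence of the common point is easy, but the exact bookkeeping near $s$ is delicate. The hard part will be proving that the bounds of Lemma \ref{Lm:Aux1} together with the cut-balance leave room for \emph{at most two} exceptional right jumps, that these must be the cyclic neighbours of $s$ rather than distant right jumps, and in particular that $t\ge 3$ is impossible and that for $t=2$ the value $c=m$ is forced (excluding $c=m\pm1$). The subtlety lies in the shared-endpoint configurations, where a jump meets $s$ at $c$ or $c+t$ instead of strictly skipping it; it is precisely there that conditions (ii) and (v) must be invoked carefully to separate scenarios (ii) and (iii) by the position of $c$.
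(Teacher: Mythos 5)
Your skeleton is genuinely different from the paper's, and its structural half is correct. The paper works locally around the short jump: Lemma \ref{Lm:LI}(ii) forces every interior point $d$ of $[c,c+t]$ to be entered from below $c$ and exited to the right, Lemma \ref{Lm:LI}(i) applied to two such right jumps kills $t>2$, and the remaining jumps are then sorted case by case. You instead argue globally: maximality forbids all disjoint pairs (LI(i),(iv)), so by one-dimensional Helly the jump intervals share a common part $[L,R]$ with $R-L\le 1$, and cut-counting then delivers the parity dichotomy, $L=m$ resp.\ $p=m+1$, and the right/left jump counts that the paper dismisses as ``easy''. This part of your argument is sound and arguably cleaner than the paper's.

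There are, however, two concrete gaps. First, in the point case you pursue only ``both boundary jumps right''; the ``both left'' configuration also yields exact intersection $\{p\}$ and is never excluded. It must be, since the hypothesis supplies a \emph{right} short jump: in the both-left configuration $s$ would contain $p$ strictly, and the left jump into $p$, whose interval is $[p,\cdot]$, cannot contain $[c,c+t]$, contradicting your own claim that every left jump skips over $s$. Second, and more seriously, the actual classification --- that $t\le 2$, that the non-skipping jumps are precisely the cyclic neighbours of $s$, and that $c=m$ when $t=2$ --- is only asserted (``the upshot is\dots'') via a vague appeal to Lemma \ref{Lm:Aux1} and LI(v), and your closing paragraph concedes it is not carried out; the even case has the same defect, since ``minimality of $t$'' alone does not give $t=1$. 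This is the heart of the proposition, so as submitted the proof is incomplete. The irony is that your own left-skip property closes everything in a few lines, with no need for Aux1 or LI(v): if $d$ is an interior point of $s$ with $d<p$ (resp.\ $d>p$), then the jump into $d$ (resp.\ out of $d$) must contain the common point, cannot have $p$ as its other endpoint because both jumps at $p$ are right, hence is a left jump whose interval has $d$ as an endpoint and therefore cannot contain $[c,c+t]$ --- a contradiction. So $s$ has no interior point other than $p$, i.e.\ $s\subseteq[p-1,p+1]$, which is exactly scenarios (ii)--(iv), and identifying the non-skipping jumps is then immediate; the identical argument with the segment $[m,m+1]$ in place of $p$ forces $t=1$, $c=m$ in the even case. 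Assemble these pieces explicitly and your proof is complete.
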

\begin{proof}
If there is $d$ such that $c<d<c+t$, consider $a$ such that $d=\rho(a)$. By
Lemma \ref{Lm:LI}(ii), $a<c$. Similarly, $\rho(c)<\rho(d)$. The three jumps
$c\mapsto \rho(c)$, $a\mapsto\rho(a)$, $\rho(a)\mapsto\rho(\rho(a))=\rho(d)$
are thus all right.

If $\rho(c)-c>2$, there are $c<d<e<\rho(c)$. As above, there are jumps
$a\mapsto d\mapsto \rho(d)$, $b\mapsto e\mapsto\rho(e)$, all right. But then
Lemma \ref{Lm:LI}(i) applies to $a\mapsto\rho(a)$ and $e\mapsto\rho(e)$, a
contradiction. Hence $t=\rho(c)-c\le 2$.

Assume $\rho(c)-c=2$ and let $a\mapsto\rho(a)=c+1\mapsto\rho(c+1)$ be the two
right jumps found above. Let $b\mapsto\rho(b)$ be a right jump different from
$a\mapsto c+1$, $c+1\mapsto\rho(c+1)$, $c\mapsto c+2$. Then $b<c$, else
$a\mapsto c+1$, $b\mapsto\rho(b)$ are disjoint and Lemma \ref{Lm:LI}(i)
applies. If $\rho(b)\le c$, the jump $b\mapsto\rho(b)$ is disjoint from
$\rho(a)\mapsto\rho(\rho(a))$, a contradiction with Lemma \ref{Lm:LI}(i). If
$\rho(b)>c$, we must have $\rho(b)>\rho(c)$, and so $b\mapsto\rho(b)$ skips
over $c\mapsto\rho(c)$. Now let $b\mapsto\rho(b)$ be any left jump. If $b<c+2$
then, in fact, $b<c$, thus $b\mapsto\rho(b)$ and $c\mapsto c+2$ are disjoint, a
contradiction by Lemma \ref{Lm:LI}(iii). Thus $b\ge c+2$. If $\rho(b)>c+1$ then
$b\mapsto\rho(b)$, $a\mapsto\rho(a)$ are disjoint and Lemma \ref{Lm:LI}(iv)
applies. If $\rho(b)\le c+1$, we must have $\rho(b)\le c$, and
$b\mapsto\rho(b)$ skips over $c\mapsto\rho(c)$. The rest of (iv) is easy.

The case $\rho(c)-c=1$ can be analyzed similarly, with help of Lemma
\ref{Lm:LI}.
\end{proof}

In view of Theorem \ref{Th:StreMEven}, we are only interested in scenarios
(ii), (iii) and (iv) of Proposition \ref{Pr:Short}.

\subsection{Long jumps}

The following Lemma follows immediately from Lemma \ref{Lm:LI}(iv), (v):

\begin{lemma}\label{Lm:Under}
Let $\rho$ be an $n$-cycle with maximal $\sstr{\rho}$. Let $a\mapsto\rho(a)$,
$b\mapsto\rho(b)$ be two long jumps of opposite directions. Then at least one
of the endpoints of $b\mapsto\rho(b)$ is in the interval $[a,\rho(a)]$.
\end{lemma}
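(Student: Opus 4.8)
The plan is to argue by contradiction from the maximality of $\sstr{\rho}$, which by Lemma \ref{Lm:LI} forbids every one of the configurations (i)--(v). First I would dispose of the degenerate possibility that the two jumps share an endpoint: in a cycle one could have $\rho(a)=b$ or $a=\rho(b)$, but then $b$ (respectively $\rho(b)$) is literally an endpoint of $[a,\rho(a)]$, so the conclusion holds trivially. Hence I would assume the two jumps have distinct endpoints, i.e.\ $|\{a,\rho(a),b,\rho(b)\}|=4$.

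Now suppose, for contradiction, that neither $b$ nor $\rho(b)$ lies in $[a,\rho(a)]$. Because the four endpoints are distinct, each of $b$, $\rho(b)$ is then \emph{strictly} outside the interval, so it lies either below $\min\{a,\rho(a)\}$ or above $\max\{a,\rho(a)\}$. This yields exactly two cases, according to whether the two points fall on the same side or on opposite sides of $[a,\rho(a)]$.

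In the same-side case the intervals $[a,\rho(a)]$ and $[b,\rho(b)]$ do not meet, so $a\mapsto\rho(a)$ and $b\mapsto\rho(b)$ are disjoint jumps of opposite direction; this is precisely configuration (iv), contradicting maximality. In the opposite-side case we have $\min\{b,\rho(b)\}<\min\{a,\rho(a)\}$ and $\max\{b,\rho(b)\}>\max\{a,\rho(a)\}$, whence $[a,\rho(a)]\subseteq[b,\rho(b)]$, so $b\mapsto\rho(b)$ skips over $a\mapsto\rho(a)$. With distinct endpoints this means $b\mapsto\rho(b)$ \emph{bridges} the long jump $a\mapsto\rho(a)$, and since the two jumps point in opposite directions this is configuration (v), again contradicting maximality.

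Both cases being impossible, one of $b$, $\rho(b)$ must lie in $[a,\rho(a)]$, as claimed. There is no substantive obstacle here; the only thing to check carefully is the definitional bookkeeping (\emph{disjoint}, \emph{skips over}, \emph{bridges}) so that the two cases map cleanly onto clauses (iv) and (v). In particular, the hypothesis that \emph{both} jumps are long guarantees that $a\mapsto\rho(a)$ is long, as clause (v) requires, and the reduction to distinct endpoints removes the only boundary subtlety.
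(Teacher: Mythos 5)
Your proof is correct and follows the paper's own route: the paper simply observes that the lemma ``follows immediately from Lemma \ref{Lm:LI}(iv), (v)'', and your two-case analysis (disjoint jumps giving (iv), bridging giving (v)), together with the careful dismissal of shared endpoints, is exactly that argument written out in full.
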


\begin{proposition}\label{Pr:Lengths}
Let $\rho\in S_n$ be an $n$-cycle with maximal $\sstr{\rho}$ and with a short
cycle $c\mapsto c+t$, $t>0$, where $n=2m+1$. Then every long jump of $\rho$ is
of length $m$, $m+1$ or $m+2$.
\end{proposition}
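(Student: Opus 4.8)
The plan is to work entirely inside the structure already pinned down by Proposition \ref{Pr:Short}: since $\rho$ has a short jump and $n=2m+1$, we are in one of the scenarios (ii), (iii), (iv), and it suffices to treat (iv) in detail, the cases $t=1$ being analogous. First I would extract a \emph{crossing structure}. Each of the two central gaps, between $m$ and $m+1$ and between $m+1$ and $m+2$, is crossed by an even number of jumps, and counting via Proposition \ref{Pr:Short}(iv) (exactly two jumps fail to skip the short jump $m\mapsto m+2$, one crossing each central gap) shows that each central gap is crossed by exactly $2m$ jumps. Since the $m$ points of $\{1,\dots,m\}$ supply only $2m$ jump-endpoints, every jump incident to a point $\le m$ must cross $(m,m+1)$; symmetrically for $\{m+2,\dots,n\}$. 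Hence $\rho$ sends $\{1,\dots,m\}$ into $\{m+1,\dots,n\}$ and $\{m+2,\dots,n\}$ into $\{1,\dots,m\}$, so there are exactly $m$ left jumps, each running from $\{m+2,\dots,n\}$ down to $\{1,\dots,m\}$ and hence containing the midpoint $m+1$, with bottom endpoints exhausting $\{1,\dots,m\}$ and top endpoints exhausting $\{m+2,\dots,n\}$; a matching count gives $m+1$ right jumps whose sources exhaust $\{1,\dots,m+1\}$ and whose top endpoints exhaust $\{m+1,\dots,n\}$. Every left jump has length at least $(m+2)-m=2$, and the only length-$2$ left jump $m+2\mapsto m$ is excluded because $\rho(m)=m+2$ together with $\rho(m+2)=m$ would form a $2$-cycle; so \emph{every} left jump is long.

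With this in hand, both bounds follow one template: a jump whose length lies outside $\{m,m+1,m+2\}$ is forced by an inclusion--exclusion count to bridge, or be bridged by, a long jump of opposite direction, whereupon Lemma \ref{Lm:LI}(v) produces a cycle with strictly larger $\sstr{\cdot}$, contradicting maximality. For the upper bound, suppose a jump has length $\ge m+3$; take it to be a right jump $p\mapsto q$, so $p\le m-2$ and $q\ge m+4$. A left jump $i\mapsto\rho(i)$ is bridged by $p\mapsto q$ exactly when $\rho(i)>p$ and $i<q$. There are $m-p$ left jumps with $\rho(i)>p$ and $q-m-2$ with $i<q$, so inclusion--exclusion over the $m$ (all long) left jumps yields at least $(q-p)-m-2\ge1$ bridged long left jumps, and Lemma \ref{Lm:LI}(v) closes the case; the left-jump case is the mirror count over the $m+1$ right jumps, which gives $\ge2$ bridged jumps, at least one of which is long after discarding the short right jump.

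For the lower bound, suppose some long jump has length $\le m-1$. If it is a right jump $p\mapsto q$ that skips the short jump, then $p\ge3$ and $q\le2m-1$, and the same inclusion--exclusion over the $m$ long left jumps produces at least $m-(q-p)\ge1$ long left jumps with bottom below $p$ and top above $q$, hence bridging $p\mapsto q$, so Lemma \ref{Lm:LI}(v) applies; the two non-skipping right jumps $a\mapsto m+1$ and $m+1\mapsto\rho(m+1)$ fall to the same count using that every left jump straddles $m+1$. A short \emph{left} jump is handled by bridging it with a long right jump, and here lies the step I expect to be the real obstacle: at the boundary length $m-1$ a naive inclusion--exclusion over all $m+1$ right jumps only gives $\ge0$. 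The count is rescued by noting that the unique short right jump $m\mapsto m+2$ lies in neither counted set, so the effective universe has size $m$ and the bound sharpens to $m-(i-\rho(i))\ge1$, guaranteeing a genuinely long bridging right jump. Carrying out these threshold counts correctly in every case, and reproducing the crossing structure and the arguments for the $t=1$ scenarios (ii) and (iii), is the bulk of the work; once the structure and the exact counts are in place, the local-improvement lemma does all the heavy lifting.
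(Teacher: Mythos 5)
Your proposal is correct, and it reaches the conclusion by a genuinely different mechanism than the paper. The paper first distills Lemma \ref{Lm:LI}(iv),(v) into Lemma \ref{Lm:Under} (two long jumps of opposite directions cannot avoid each other), and then argues by pigeonhole \emph{inside} the offending interval: for a long jump of length $t<m$, all $m$ opposite-direction long jumps must place an endpoint among the $t+1\le m$ points of that interval, no point can serve two of them (no two are consecutive, by Proposition \ref{Pr:Short}), and the midpoint region ($m+1$, or $m$ and $m+2$, depending on the scenario) is unavailable --- too few points, contradiction; for length $\ge m+3$ it counts points \emph{outside} the interval and invokes Lemma \ref{Lm:LI}(v). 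You never use Lemma \ref{Lm:Under}: instead you first extract the global crossing structure (left-jump bottoms exhaust $\{1,\dots,m\}$ and tops exhaust $\{m+2,\dots,n\}$, right-jump sources exhaust $\{1,\dots,m+1\}$ and targets $\{m+1,\dots,n\}$ --- in effect, the position-to-position map is a crossing permutation in the sense of Section 2), and then in every case you exhibit a bridging pair by inclusion--exclusion and finish with Lemma \ref{Lm:LI}(v) alone. The two counts are complementary faces of the same dichotomy: the paper shows "every opposite jump must touch the interval, but they cannot all fit," while you show "some opposite jump must bridge it." Your counts check out, including in scenarios (ii) and (iii), where the same exhaustion of endpoint sets holds; and your resolution of the boundary case $t=m-1$ --- shrinking the inclusion--exclusion universe to the $m$ long right jumps because the short jump $m\mapsto m+2$ lies in neither counted set --- is exactly the step that plays the role of the paper's observation that the midpoints cannot serve as endpoints, and it is valid (uniqueness of the short jump follows from Proposition \ref{Pr:Short} applied to any other candidate). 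What your route buys is uniformity (one template for all four cases) and a stronger, explicitly stated structural fact; what the paper's route buys is brevity, since Lemma \ref{Lm:Under} plus nonconsecutiveness makes the small-length cases immediate without any global bookkeeping. The only caveat is the one you flag yourself: the $t=1$ scenarios and the remaining threshold counts are asserted as analogous rather than written out, but they do go through as claimed.
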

\begin{proof}
Let $k\mapsto k+t$, $0<t<m$, be a long right jump of $\rho$. By Proposition
\ref{Pr:Short}, $m+1$ is the unique point at which $2$ right jumps are
consecutive, and, moreover, $m+1\in[k,k+t]$. By the same Proposition, there are
$m$ left jumps, no two consecutive. By Lemma \ref{Lm:Under}, each of these left
jumps has an endpoint in $[k,k+t]$. Then there are not enough points in
$[k,k+t]$ for $m$ nonconsecutive left jumps to start or end at.

Let $k\mapsto k-t$, $0<t<m$, be a left jump of $\rho$. By Proposition
\ref{Pr:Short} and Lemma \ref{Lm:Under}, there are $m$ long right jumps and
each of them has an endpoint in $[k-t,k]$. In scenario (ii) of Proposition
\ref{Pr:Short}, $m\in [k-t,k]$, no long right jump starts or ends at $m+1$, and
no two long right jumps are consecutive. In scenario (iii), $m+2\in[k-t,k]$, no
long right jump starts or ends at $m$, and no two long right jumps are
consecutive. In scenario (iv), $m$, $m+2\in[k-t,k]$, no long right jump starts
or ends at $m$, $m+2$, and precisely two long right jumps are consecutive. In
any case, there are not enough points in $[k-t,k]$ to accommodate all long
right jumps.

Consider a jump $a\mapsto\rho(a)$ of length at least $m+3$. Then there are at
most $2m+1-(m+2)=m-1$ points outside of $(a,\rho(a))$. Assume that $a<\rho(a)$.
Then one of the $m$ left jumps, no two of which are consecutive, must have both
endpoints in $(a,\rho(a))$. Assume that $a>\rho(a)$. Note that no point outside
of $(a,\rho(a))$ can be both the starting and the terminating point of a right
jump (this is obvious for $a$, $\rho(a)$, and it is true for the remaining
points by Lemma \ref{Lm:LI}(iv)). Hence one of the $m+1$ long right jumps must
have both endpoints in $(a,\rho(a))$. In any case, we have reached a
contradiction by Lemma \ref{Lm:LI}(v).
\end{proof}

\begin{lemma}\label{Lm:ScenarioII} Let $\rho$ be as in scenario \emph{(ii)}
of Proposition $\ref{Pr:Short}$. Then every long jump is of length $m$, $m+1$,
or $m+2$, $\rho$ is uniquely determined, and $\sstr{\rho}=m^m\cdot (m+1)\cdot
(m+2)^{m-1}$. When $m$ is odd, we have
\begin{displaymath}
    \rho(i) = \left\{\begin{array}{ll}
        i+1,&i=m,\\
        i-(m+1),&i=m+2,\\
        i+m,&i\text{\ even},\,i<m+2,\\
        i+(m+2),&i\text{\ odd},\,i<m,\\
        i-m,&i\text{\ even},\,i>m+1,\\
        i-(m+2),&i\text{\ odd},\,i>m+2.
    \end{array}\right.
\end{displaymath}
When $m$ is even, we have
\begin{displaymath}
    \rho(i) = \left\{\begin{array}{ll}
        i+1,&i=m,\\
        i+(m+1),&i=1,\\
        i+m,&i\text{\ odd},\,1<i<m+2,\\
        i+(m+2),&i\text{\ even},\,i<m,\\
        i-m,&i\text{\ even},\,i>m+1,\\
        i-(m+2),&i\text{\ odd},\,i>m+2.
    \end{array}\right.
\end{displaymath}
\end{lemma}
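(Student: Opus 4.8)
The plan is to read off the skeleton that scenario~(ii) imposes on $\rho$ and then reduce the whole question to a transparent statement about the product of two matchings on a path.

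First I would fix the coarse structure. By Proposition~\ref{Pr:Short}(ii) the short jump is $m\mapsto m+1$ and every jump except the right jump $m+1\mapsto\rho(m+1)$ following it skips over $[m,m+1]$; hence the $m+1$ right jumps run from $\{1,\dots,m+1\}$ onto $\{m+1,\dots,2m+1\}$ and the $m$ left jumps run from $\{m+2,\dots,2m+1\}$ onto $\{1,\dots,m\}$. Proposition~\ref{Pr:Lengths} already gives the length claim: every long jump has length $m$, $m+1$, or $m+2$. A length-one jump out of $m+1$ would be $m+1\mapsto m+2$, a second short jump not permitted by~(ii), so the following jump is long; as $m+1+\ell\le 2m+1$ with $\ell\in\{m,m+1,m+2\}$, this forces $\rho(m+1)=2m+1$. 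Writing $\rho(a)=\phi(a)+m+1$ for $a\in\{1,\dots,m-1\}$ and $\rho(a)=\psi(a-m-1)$ for $a\in\{m+2,\dots,2m+1\}$, the length bound reads $|\phi(a)-a|\le1$ and $|\psi(a')-a'|\le1$, so $\phi$ and $\psi$ are permutations of $\{1,\dots,m-1\}$ and $\{1,\dots,m\}$, each a product of disjoint adjacent transpositions. If $\phi$, $\psi$ use $p$ transpositions in total, each transposition replaces a pair of factors $(m+1)^2$ by $m(m+2)=(m+1)^2-1$ (Lemma~\ref{Lm:3Reals}), so $\sstr{\rho}=m\bigl(m(m+2)\bigr)^{p}(m+1)^{\,2m-1-2p}$.

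Next I would reduce the single-cycle requirement to $\{1,\dots,m\}$. Extend $\phi$ to $\hat\phi$ on $\{1,\dots,m\}$ by $\hat\phi(m)=m$ and set $\theta=\psi\circ\hat\phi$. Tracing $\rho$, each low point $a$ is followed, through one high point (or through $m+1,2m+1$ when $a=m$), by the low point $\psi(\hat\phi(a))=\theta(a)$, and a count shows each of the $m+1$ high points occurs exactly once as such an intermediate. Thus the $\rho$-orbits correspond bijectively to the $\theta$-orbits, and \emph{$\rho$ is a single $(2m+1)$-cycle iff $\theta$ is a single $m$-cycle}.

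The crux is deciding when the product $\theta=\psi\circ\hat\phi$ of two matchings on the path $P_m$ (vertices $1,\dots,m$, edges $\{i,i+1\}$) is an $m$-cycle. Colour the $\hat\phi$-edges red and the $\psi$-edges blue; since both are matchings, the union graph $G$ has maximum degree $2$, so its components are paths and properly alternating even cycles. A path component on $k$ vertices contributes a single $k$-cycle to $\theta$, while an even cycle contributes two cycles, so $\theta$ is a single $m$-cycle exactly when $G$ is one path on all $m$ vertices. As $G$ uses only edges of $P_m$, the only such $G$ is $P_m$ itself: all $m-1$ edges are present, and, being a proper $2$-colouring of a path, they alternate in colour, whence $p=m-1$ is \emph{forced}. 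Since $\hat\phi$ fixes $m$, the edge $\{m-1,m\}$ must be blue, which picks out exactly one of the two alternating colourings according to the parity of $m$, determining $\phi$, $\psi$, and hence $\rho$ uniquely; unwinding the two colourings produces precisely the displayed formulas for $m$ odd and $m$ even. Substituting $p=m-1$ gives $\sstr{\rho}=m\bigl(m(m+2)\bigr)^{m-1}(m+1)=m^m(m+1)(m+2)^{m-1}$.

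I expect the main obstacle to be the bookkeeping in the correspondence $\rho\leftrightarrow\theta$ together with the dictionary identifying the cycle type of a product of two involutions with the components of the union graph $G$. Once that dictionary is in place, the innocuous constraint $\hat\phi(m)=m$ does all the remaining work: it simultaneously forces $p=m-1$ (ruling out the naive larger product $m(m+1)^{2m-1}$, which would split $\rho$ into several cycles) and pins down $\rho$ uniquely.
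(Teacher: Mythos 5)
Your proposal is correct, and it takes a genuinely different route from the paper. The paper's proof is a greedy construction: starting from the short jump $m\mapsto m+1$ it extends the cycle alternately one jump forward and one jump backward, each choice being forced by the length bound (Proposition~\ref{Pr:Lengths}), the direction constraints of scenario (ii), and the need not to close the cycle prematurely; this is carried out explicitly only for $m=3$ and $m=4$, and the general pattern (including its dependence on the parity of $m$) is asserted to be clear from the examples. You instead package the $m-1$ long right jumps and the $m$ left jumps into two involutions $\phi$, $\psi$ given by matchings of adjacent transpositions, prove that $\rho$ is a $(2m+1)$-cycle exactly when $\theta=\psi\circ\hat\phi$ is an $m$-cycle, and resolve the latter through the dictionary between products of two matchings and the components of their union graph: since that graph is a subgraph of the path on $\{1,\dots,m\}$, connectivity forces it to be the entire path, the matching property forces the colours to alternate, and $\hat\phi(m)=m$ selects the unique admissible alternation. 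Uniqueness, the parity dichotomy, the displayed formulas, and $p=m-1$ (hence $\sstr{\rho}=m^m(m+1)(m+2)^{m-1}$) all drop out at once, for every $m$; your route thus buys a uniform, rigorous argument where the paper buys brevity and settles for examples plus an evident pattern. (Your construction reproduces the paper's formulas for $m=3$ and $m=4$ exactly.)

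One step where you are precisely as informal as the paper: forcing $\rho(m+1)=2m+1$. Scenario (ii) of Proposition~\ref{Pr:Short}, read literally, does not exclude $\rho(m+1)=m+2$; that would make $m+1\mapsto m+2$ a second short jump, to which Proposition~\ref{Pr:Lengths} (a statement about long jumps only) does not apply, and such a configuration satisfies all the combinatorial conditions listed in scenario (ii). The paper's parenthetical ``else the jump is too short'' has the same hole. The step is rescued by maximality: if $\rho(m+1)=m+2$, the remaining $2m-1$ jump lengths sum to $2m^2+2m-2$, so $\sstr{\rho}\le (m+1)^m(m+2)^{m-1}$ by Theorem~\ref{Th:MaxP}, which for $m\ge 2$ is strictly smaller than $m^m(m+1)(m+2)^{m-1}$, the value attained by your explicit cycle; for $m=1$ the two readings coincide since $m+2=2m+1$. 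Adding this observation would make your proof complete, and strictly tighter than the published one.
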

\begin{proof}
We work out two examples, one for $m=3$ and one for $m=4$. It will then become
clear that the cycle $\rho$ is unique, that its structure is determined by the
parity of $m$, and that the formulae in the statement of the Lemma are correct.
We will build the cycle from the shortest jump $m\mapsto m+1$ by alternatively
extending it by one jump forward and one jump backwards.

Let $m=3$. By our assumption, $\rho(3)=4$. We now determine $\rho(4)$ (building
the cycle forward) and $\rho^{-1}(3)$ (building the cycle backwards). Since
$\rho(4)>4$ by the assumption, we must have $\rho(4)=7$ (else the jump is too
short). Then $\rho^{-1}(3)=6$, since $\rho^{-1}(3)=7$ would result in a short
cycle, and all other values yield a jump that is too short. We next determine
$\rho(7)$ and $\rho^{-1}(6)$. We must have $\rho(7)=2$, since $\rho(7)=1$ would
be too long. Then $\rho^{-1}(6)=1$ follows, avoiding a short cycle. Now we
obviously have $\rho(2)=5=\rho^{-1}(1)$.

Let $m=4$. By our assumption, $\rho(4)=5$. Proceeding as in the case $m=3$, we
have $\rho(5)=9$, $\rho^{-1}(4)=8$, $\rho(9)=3$, $\rho^{-1}(8)=2$, $\rho(3)=7$,
$\rho^{-1}(2)=6$, $\rho(7)=1$, and $\rho^{-1}(6)=1$.
\end{proof}

Similarly:

\begin{lemma}\label{Lm:ScenarioIII}
Let $\rho$ be as in scenario \emph{(iii)} of Proposition $\ref{Pr:Short}$. Then
every long jump is of length $m$, $m+1$, or $m+2$, $\rho$ is uniquely
determined, and $\sstr{\rho}=m^m\cdot (m+1)\cdot (m+2)^{m-1}$. The formulae for
$\rho$ are similar to those of Lemma $\ref{Lm:ScenarioII}$.
\end{lemma}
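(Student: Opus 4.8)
The plan is to avoid repeating the case analysis of Lemma \ref{Lm:ScenarioII} and instead to exhibit a single symmetry of $S_n$ that carries scenario (ii) bijectively onto scenario (iii) while preserving the multiset of jump lengths. First note that the claim ``every long jump has length $m$, $m+1$, or $m+2$'' is already contained in Proposition \ref{Pr:Lengths}, whose proof treats scenario (iii) explicitly; so nothing new is needed there, and it remains only to establish uniqueness, the value of $\mathrm{s}^*$, and the formulae.

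Let $\phi\in S_n$ be the reflection $\phi(i)=n+1-i=2m+2-i$, an involution, and set $\tau(\rho)=\phi\circ\rho^{-1}\circ\phi$. Writing a jump of $\tau(\rho)$ in the form $\phi(\rho(a))\mapsto\phi(a)$, one checks directly that $\tau$ is an involution on the set of $n$-cycles, that it sends each jump $a\mapsto\rho(a)$ to a jump of the same length $|a-\rho(a)|$, and that it preserves jump direction: inversion reverses direction, and conjugation by the order-reversing $\phi$ reverses it a second time. Hence $\sstr{\tau(\rho)}=\sstr{\rho}$, so $\tau$ permutes the maximizers, and it preserves the split into $m+1$ right and $m$ left jumps. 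Moreover $\tau$ sends the short jump $m\mapsto m+1$ of scenario (ii) to $m+1\mapsto m+2$, the short jump of scenario (iii) with $c=m+1$; and since inversion reverses the cyclic order while $\phi$-conjugation preserves both interval containment and cyclic order, the unique non-skipping jump \emph{following} the short jump in (ii) becomes the unique non-skipping jump \emph{preceding} it in (iii). Thus $\tau$ restricts to a bijection between the cycles of scenario (ii) and those of scenario (iii).

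With this in hand the lemma follows at once. Given any $\rho$ in scenario (iii), the cycle $\tau(\rho)$ lies in scenario (ii), so by Lemma \ref{Lm:ScenarioII} it equals the unique scenario-(ii) cycle $\rho_0$ with $\sstr{\rho_0}=m^m(m+1)(m+2)^{m-1}$; since $\tau$ is an involution, $\rho=\tau(\rho_0)$ is uniquely determined and $\sstr{\rho}=\sstr{\rho_0}$. The explicit formulae are obtained by substituting the formulae of Lemma \ref{Lm:ScenarioII} for $\rho_0$ into the relation $\tau(\rho_0)(\phi(\rho_0(a)))=\phi(a)$ and simplifying according to the parity of $m$. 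The main obstacle is purely bookkeeping: one must verify carefully that $\tau$ respects exactly the three qualitative features that distinguish (iii) from (ii)---the position of the short jump at $c=m+1$, the reversal of ``following'' to ``preceding,'' and the preservation of the $m+1$/$m$ right/left split---since these are the only points at which the two scenarios differ. Once that verification is in place, no separate inductive construction of $\rho$ is required; alternatively, one could mimic the forward-and-backward building of the cycle in Lemma \ref{Lm:ScenarioII} verbatim, but the symmetry argument is shorter and makes the equality of the two maximal values transparent.
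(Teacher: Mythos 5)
Your proof is correct, but it takes a genuinely different route from the paper. The paper's own ``proof'' of this lemma is simply the word ``Similarly'': the authors intend the reader to repeat, for scenario (iii), the same forward-and-backward construction of the cycle from its short jump that was carried out in the proof of Lemma \ref{Lm:ScenarioII}, with the roles of ``following'' and ``preceding'' exchanged. You instead transfer the result by the symmetry $\tau(\rho)=\phi\circ\rho^{-1}\circ\phi$ with $\phi(i)=2m+2-i$, and your verification of its key properties holds up: $\tau$ is an involution on $n$-cycles; the jump $a\mapsto\rho(a)$ becomes $\phi(\rho(a))\mapsto\phi(a)$, of the same length and (because inversion and conjugation by the order-reversing $\phi$ each flip direction) the same direction, so $\sstr{\tau(\rho)}=\sstr{\rho}$ and maximality, shortness, and the $m{+}1$/$m$ right/left split are preserved; reflection preserves interval containment, hence the skipping relation; and the short jump $m+1\mapsto m+2$ of scenario (iii) goes to $m\mapsto m+1$ while the non-skipping jump preceding it goes to the non-skipping jump following its image, so $\tau$ does carry scenario (iii) into scenario (ii) and conversely. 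You are also right that the length claim is already covered by Proposition \ref{Pr:Lengths}, since scenario (iii) presupposes a maximizer with a short jump. What each approach buys: the paper's repetition of the construction produces the explicit formulae for $\rho$ directly, at the cost of redoing the entire case analysis; your argument is shorter, yields uniqueness and the value $m^m(m+1)(m+2)^{m-1}$ immediately from Lemma \ref{Lm:ScenarioII}, makes the equality of the two scenarios' maxima structurally transparent, and incidentally explains the ``mirror images'' language in Theorem \ref{Th:StreMOdd}; its only cost is that the explicit formulae must be extracted afterwards by substituting into $\tau(\rho_0)(\phi(\rho_0(a)))=\phi(a)$, which is routine bookkeeping.
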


\begin{lemma}\label{Lm:LotOfM} Let $\rho$ be as in scenario \emph{(iv)} of Proposition
$\ref{Pr:Short}$. Then there are at least $m-1$ jumps of length $m$ in
$\rho$.
\end{lemma}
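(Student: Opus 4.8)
The plan is to exploit the rigid structure that Proposition~\ref{Pr:Short}(iv) imposes on a maximiser $\rho$. First I would record the picture: the short jump is $m\mapsto m+2$, the two non-skipping jumps are right jumps meeting at $m+1$ (one ends at $m+1$, one starts there), and the remaining $2m-1$ jumps skip over $[m,m+2]$, so each of them joins the lower half $\{1,\dots,m\}$ to the upper half $\{m+2,\dots,2m+1\}$. By Proposition~\ref{Pr:Lengths} every long jump has length $m$, $m+1$, or $m+2$. It is then convenient to introduce the height $h(i)=|i-(m+1)|$, which is a bijection $\{1,\dots,m\}\to\{1,\dots,m\}$ on each half: a skipping jump joining a lower point $x$ to an upper point $y$ has length $h(x)+h(y)$, so its being long means $h(x)+h(y)\in\{m,m+1,m+2\}$, and the length-$2$ short jump is exactly the pair of height-$1$ points.

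Next I would pin down the two non-skipping jumps using maximality. If $1$ is not the tail of the jump into $m+1$, then $1$ emits a skipping jump to some upper point of height $\le 2$, i.e.\ to $m+2$ or $m+3$; since $m+2$ is already the image of $m$, this forces the other non-skipping jump to have length $m$ (that is, to reach $2m+1$). Dually for $2m+1$. Hence at least one non-skipping jump has length $m$, and I would then argue, via Lemma~\ref{Lm:3Reals} (a second jump of minimal length $2$ leaves a wasteful factor $2$ in $\sstr{\rho}$, which a re-routing removes), that the short jump is unique. This yields both non-skipping jumps of length $m$, contributing $2$ to the count, and leaves the height-$1$ points forced to be adjacent.

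Contracting the vertex $m+1$ turns the single cycle into one alternating $2m$-cycle between the halves; reading off heights gives a cyclic sequence $c_1,\dots,c_{2m}$ whose odd- and even-indexed entries each exhaust $\{1,\dots,m\}$, with $c_1=c_{2m}=m$, with all $2m-1$ skipping sums $c_i+c_{i+1}$ in $\{2,m,m+1,m+2\}$, and with exactly one sum equal to $2$ (the adjacent height-$1$ points). Writing $\epsilon_i=c_i+c_{i+1}-(m+1)\in\{-1,0,1\}$ on the long edges, the relation $c_{i+2}-c_i=\epsilon_{i+1}-\epsilon_i$ together with distinctness within each parity class shows that no two consecutive $\epsilon_i$ agree, while $\sum_i\epsilon_i=0$ (forced by $c_1=c_{2m}=m$) makes the numbers of length-$m$ and length-$(m+2)$ skipping jumps equal. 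Using the identity $\sum_i|\rho(i)-i|=2m(m+1)$, the desired inequality $N_m\ge m-1$ becomes a bound on the number of sum-$(m+1)$ edges, and I would finish by combining these constraints (ideally in the exchange style of Lemma~\ref{Lm:LI} and Lemma~\ref{Lm:Under}, or via Theorem~\ref{Th:MaxP}).

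The hard part will be precisely this last step. The linear identities by themselves are consistent with undesirable distributions---for instance, all middle jumps of length $m+1$---so they cannot force the conclusion alone; only the genuinely cyclic constraints (both height-subsequences being permutations of $\{1,\dots,m\}$, and the forced adjacency of the two minimal points) exclude such patterns, and weaker fragments of these constraints still permit roughly $m/2$ length-$m$ jumps. I therefore expect the crux to be an explicit local improvement: assuming fewer than $m-1$ jumps of length $m$, so that sum-$(m+1)$ or sum-$(m+2)$ jumps are abundant, one must locate two of them whose re-routing raises $\sstr{\rho}$, contradicting maximality. Tracking how the near-reflection steps $c_{i+1}\approx(m+1)-c_i$ interact with the mandatory adjacency of the height-$1$ points---so as to guarantee that such an improving pair always exists---is where the real work lies.
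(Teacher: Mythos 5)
Your proposal is not a proof: it is a (largely sound) reduction followed by an explicit admission that the decisive step is missing. The setup is fine---in scenario (iv) the two non-skipping jumps are indeed $1\mapsto m+1$ and $m+1\mapsto 2m+1$, both of length $m$, though this needs none of your detour through Lemma~\ref{Lm:3Reals}: by Proposition~\ref{Pr:Lengths} the long jump ending at $m+1$ has length in $\{m,m+1,m+2\}$ and must start at a point $\ge 1$, so its length is $m$, and dually for the jump leaving $m+1$. Your height/parity bookkeeping then correctly encodes the remaining constraints. But, as you yourself verify, the constraints you actually establish (no two consecutive $\epsilon_i$ equal, equally many $+1$'s as $-1$'s) still allow configurations with only about $m/2$ jumps of length $m$, far short of the claimed $m-1$; and the ``explicit local improvement'' that you say must close this gap is never produced. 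Since the entire content of the lemma is exactly this missing counting step, the proposal has a genuine gap.

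Moreover, your guess about the nature of the missing step points in the wrong direction: no further exchange or re-routing argument (beyond what is already packaged into Proposition~\ref{Pr:Lengths}) is needed. The paper finishes purely combinatorially from the cycle structure. For each $i\in\{1,\dots,m-1\}$ let $L(i)$ be the length of the unique left jump ending at $i$ and $R(i)$ the length of the right jump starting at $i$; note $L(i)\ne R(i)$, else $\rho$ contains a $2$-cycle. One then shows at most one index $i$ can have both $L(i)>m$ and $R(i)>m$: taking the two smallest offending indices $i<j$ and setting $k=j-i$, the facts that no two jumps may share an endpoint and that $\rho$ has no proper subcycle force the values $L(i+1)$, $R(i+1)$, $L(i+2)$, \dots\ one after another, and the forced chain eventually collides either with the jump $m+1\mapsto 2m+1$ (for instance $R(m-1)=m+2$ would have to terminate at $2m+1$) or with a point that can no longer receive any jump of length $m$, $m+1$ or $m+2$. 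Hence at least $m-2$ of the indices $1,\dots,m-1$ contribute a jump of length $m$, which together with $m+1\mapsto 2m+1$ gives the required $m-1$. In your language this is a forcing/propagation argument exploiting the injectivity of the two parity classes of your height sequence---precisely the ``genuinely cyclic constraints'' you identified but never actually used.
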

\begin{proof}
We use Proposition \ref{Pr:Lengths} without reference throughout this proof.

For $i\in \{1,\dots,m-1\}$, let $L(i)$ denote the length of the left jump
ending at $i$, and $R(i)$ the length of the right jump starting at $i$. Note
that we cannot have $L(i)=R(i)$, else a $2$-cycle arises. We claim that in at
most one case among $1$, $\dots$, $m-1$ both $L(i)$, $R(i)$ are bigger than
$m$, hence proving the lemma (since $m+1\mapsto 2m+1$ is also of length $m$).

For a contradiction, let $i<j$ be the two smallest integers in $\{1$,
$\dots$, $m-1\}$ such that $L(i)$, $R(i)$, $L(j)$, $R(j)>m$. Assume that
$L(i)=m+1$, $R(i)=m+2$. (The case $L(i)=m+2$, $R(i)=m+1$ is similar.) Let
$k=j-i$.

Assume $k=1$. Since $R(i+1)\ne m+1$, we have $L(i+1)=m+1$, $R(i+1)=m+2$. Since
$R(i+2)\ne m$ and $R(i+2)\ne m+1$, we have $R(i+2)=m+2$. Since $L(i+2)\ne m$,
we have $L(i+2)=m+1$. Continuing in this fashion, we arrive at $R(m-1)=m+2$,
contradicting $m+1\mapsto 2m+1$.

Assume $k=2$. Since $L(i+1)\ne m$, we have $R(i+1)=m$. If $L(i+1)=m+1$, we have
a $4$-cycle. Hence $L(i+1)=m+2$. Since $j=i+2$, $L(i+2)\ne m$. Also, $L(i+2)\ne
m+1$. Thus $L(i+2)=m+2$. But then the jump starting at $m+i+2$ is not of length
$m$, $m+1$, or $m+2$, a contradiction.

Assume $k=3$. Then $R(i+1)=m$, and thus $L(i+1)=m+2$ else we have a $4$-cycle.
Then $L(i+2)=m$, and thus $R(i+2)=m+2$ else we have a $6$-cycle. As $R(i+3)\ne
m$ and $R(i+3)\ne m+1$, we have $R(i+3)=m+2$. But then no jump can possibly end
at $m+i+3$, a contradiction.

This pattern continues for larger $k$.
\end{proof}

\subsection{The odd case}

\begin{theorem}\label{Th:StreMOdd} Let $n=2m+1>1$. Then the maximum of
$\mathrm{s}^*_{\mathcal B}$ over all permutations of $S_n$ is $(m^m\cdot
(m+1)\cdot(m+2)^{m-1})^{1/{n-1}}$, and it is attained precisely by the two
permutations of Lemmas $\ref{Lm:ScenarioII}$ and $\ref{Lm:ScenarioIII}$, and by
their mirror images.
\end{theorem}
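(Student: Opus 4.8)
The plan is to argue entirely in the cycle picture set up before Proposition~\ref{Pr:Short}: maximizing $\stre{*}{\mathcal B}{\pi}$ over $S_n$ is the same as maximizing $\sstr{\rho}$ over all $n$-cycles $\rho$, where $\sstr{\rho}$ is the product of all jump lengths of $\rho$ save the shortest. Because $n=2m+1\ge 3$, every $n$-cycle has a short jump, so a maximal cycle $\rho$, after possibly applying the reflection $i\mapsto n+1-i$ (an involution that preserves every jump length, hence $\sstr{\rho}$, and that converts a left short jump into a right one), satisfies the hypotheses of Proposition~\ref{Pr:Short}. As the paper notes, for odd $n$ only scenarios (ii), (iii), (iv) can occur.

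First I would settle scenarios (ii) and (iii). By Lemmas~\ref{Lm:ScenarioII} and~\ref{Lm:ScenarioIII} each determines $\rho$ uniquely and yields $\sstr{\rho}=m^m(m+1)(m+2)^{m-1}$; this exhibits an explicit cycle of that value, so the maximum is at least $m^m(m+1)(m+2)^{m-1}$, and the two mirror images under $i\mapsto n+1-i$ share the value. These four cycles are exactly the maximizers claimed (cycles whose short jump is right are precisely the scenario~(ii) and~(iii) cycles, and mirroring accounts for those with a left short jump). Everything therefore reduces to showing that scenario~(iv) gives a strictly smaller value; once that is done, the maximum equals $m^m(m+1)(m+2)^{m-1}$, the claimed bound is its $(n-1)$-st root, and the four cycles are the complete list.

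The crux, and the step I expect to be hardest, is scenario~(iv). Here the short (dropped) jump has length $2$, and by Proposition~\ref{Pr:Lengths} every remaining jump has length $m$, $m+1$ or $m+2$; let $a,b,c$ count the long jumps of each length, so $a+b+c=2m$ and $\sstr{\rho}=m^a(m+1)^b(m+2)^c$. Lemma~\ref{Lm:LotOfM} supplies $a\ge m-1$. For an upper bound on $b+2c$ I would combine two observations. Let $\Sigma$ be the sum of all jump lengths of $\rho$. Then $\Sigma=2\sum_{i:\,i<\rho(i)}(\rho(i)-i)$ is even; and cutting the cycle at its short jump yields a permutation whose $n-1$ consecutive differences sum to $\Sigma-2$, which is at most $2m^2+2m-1$ by Theorem~\ref{Th:StreA}. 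Hence $\Sigma\le 2m^2+2m+1$, so $\Sigma\le 2m^2+2m$ by parity, and since $\Sigma=2m^2+b+2c+2$ this gives $b+2c\le 2m-2$ (note the scenario~(ii)/(iii) configuration has $b+2c=2m-1$, so it is strictly excluded).

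It then remains to maximize $m^a(m+1)^b(m+2)^c$ under $a+b+c=2m$, $a\ge m-1$ and $b+2c\le 2m-2$. Since the logarithm of this product is linear in $(a,b,c)$, the maximum over the integer points occurs at a vertex of the feasible polygon; the equalizing inequality $m(m+2)<(m+1)^2$ of Lemma~\ref{Lm:3Reals} together with the monotonicity $M_{n,s}<M_{n,s+1}$ of Theorem~\ref{Th:MaxP} steers it to the extreme point $a=m-1$, $b=4$, $c=m-3$, of value $m^{m-1}(m+1)^4(m+2)^{m-3}$ (for $m\ge 3$; a short check confirms the other vertices give less). Comparing with the target reduces to $(m+1)^3<m(m+2)^2$, which holds because $m(m+2)^2-(m+1)^3=m^2+m-1>0$; thus scenario~(iv) is strictly suboptimal. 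Finally I would verify the two small cases $m=1$ ($n=3$) and $m=2$ ($n=5$) directly, where $c=m-3<0$ makes the extreme point degenerate. This establishes the maximum and its four maximizers.
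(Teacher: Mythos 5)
Your proposal is correct and follows the paper's overall skeleton: reduce to $n$-cycles, invoke Proposition \ref{Pr:Short} (only scenarios (ii)--(iv) can occur for odd $n$, after mirroring to make the short jump a right jump), settle (ii) and (iii) by Lemmas \ref{Lm:ScenarioII} and \ref{Lm:ScenarioIII}, and rule out scenario (iv) by bounding its product by $m^{m-1}(m+1)^4(m+2)^{m-3}$ and comparing via $(m+1)^3<m(m+2)^2$. The one genuinely different step is how you obtain the constraint on the total length of the long jumps in scenario (iv). The paper reads off the \emph{exact} sum $2m^2+2m-2$ from the positional data of that scenario (the $m$ left jumps start at $m+2,\dots,2m+1$ and the $m$ long right jumps at $1,\dots,m-1,m+1$); you instead deduce the upper bound $\Sigma-2\le 2m^2+2m-2$ by cutting the cycle at its short jump, applying the additive Theorem \ref{Th:StreA} to the resulting permutation, and then using the parity of $\Sigma=2\sum_{i<\rho(i)}\bigl(\rho(i)-i\bigr)$ to sharpen $2m^2+2m+1$ to the even bound $2m^2+2m$. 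This is a nice economy: it uses only the jump \emph{counts} from Proposition \ref{Pr:Short}, not the jump positions, and the parity step is exactly what makes Theorem \ref{Th:StreA} strong enough. The remainder---Lemma \ref{Lm:LotOfM} giving $a\ge m-1$, then the maximization forcing the extreme composition $(a,b,c)=(m-1,4,m-3)$---is the same as the paper's appeal to Theorem \ref{Th:MaxP}, merely rephrased as a vertex-of-polytope argument; and your explicit attention to the degenerate cases $m=1,2$ (where $c=m-3<0$) addresses a point the paper silently glosses over.
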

\begin{proof}
Let $\rho$ be a permutation obtained in scenario (iv) of Proposition
\ref{Pr:Short}. Its $m$ left jumps start in positions $m+2$, $\dots$, $2m+1$,
and its $m$ long right jumps start in positions $1$, $\dots$, $m-1$, $m+1$. It
is then easy to see that the sum of the lengths of the $2m$ long jumps of
$\rho$ is $2m^2+2m-2$. By Proposition \ref{Pr:Lengths}, each long jump is of
length $m$, $m+1$ or $m+2$, and by Lemma \ref{Lm:LotOfM} there are at least
$m-1$ jumps of length $m$. If $x_1$, $\dots$, $x_{2m}$ are positive integers
such that $m\le x_i\le m+2$, $x_1+\cdots+x_{2m}=2m^2+2m-2$ and such that at
least $m-1$ of them are equal to $m$, then Theorem \ref{Th:MaxP} implies that
the product $x_1\cdots x_{2m}$ cannot exceed $m^{m-1}(m+1)^4(m+2)^{m-3}$.
However, $m^{m-1}(m+1)^4(m+2)^{m-3}$ is less than $m^m(m+1)(m+2)^{m-1}$ if and
only if $(m+1)^3$ is less than $m(m+2)^2$, which is true for every positive
$m$. We are done by Lemmas \ref{Lm:ScenarioII}, \ref{Lm:ScenarioIII} and by
their mirrored versions.
\end{proof}

\section{Acknowledgement}

We thank Alvaro Arias for telling us about the connection between probability
measure spaces and permutations, and to George Edwards for providing us with
papers on interleaver designs.

An early version of this paper dealt only with the displacement of
permutations, and mentioned the problem of maximizing $\mathrm{s}^*_{\mathcal
B}$ as an open question. An anonymous referee responded that he/she can answer
the question, that the maximum is as in Theorems \ref{Th:StreMEven} and
\ref{Th:StreMOdd}, and that he/she has ``an elementary but not very short
proof.'' The referee did not reveal the proof and did not indicate how many and
which permutations attain the maximum, but we are indebted to him/her for
pointing us in the right direction.

\bibliographystyle{plain}

\end{document}